\newtheorem{lemma}{Lemma}
\newtheorem{theorem}{Theorem}
\newtheorem{proposition}{Proposition}
\newtheorem{definition}{Definition}
\newenvironment{proof}{{\bf Proof}.}{\hfill $\Box$}
\newtheorem{remark}{Remark}
\newtheorem{corollary}{Corollary}
\begin{document}
\title{ $C^*$--non--linear second quantization}
\author{Luigi Accardi,  Ameur Dhahri\\\vspace{-2mm}\scriptsize Volterra Center, University of Roma Tor Vergata\\\vspace{-2mm}\scriptsize Via Columbia 2, 00133 Roma, Italy\\\vspace{-2mm}\scriptsize e-mail:accardi@volterra.uniroma2.it\\\scriptsize ameur@volterra.uniroma2.it}

\date{}
\maketitle

\begin{abstract}
We construct an inductive system of $C^*$--algebras each 
of which is isomorphic to a finite tensor product of copies of the one 
mode $n$--th degree polynomial extension of the usual Weyl algebra 
constructed in our previous paper \cite{[AcDha]}. \\
We prove that the inductive limit $C^*$-algebra is factorizable and has a 
natural localization given by a family of $C^*$-sub-algebras each of 
which is localized on a bounded Borel subset of $\mathbb{R}$. 
Finally we prove that the corresponding families of Fock 
states, defined on the inductive family of $C^*$-algebras, is projective 
if and only if $n=1$. 
 This is a weak form of the no--go theorems which
emerge in the study of representations of current algebras over Lie algebras.
\end{abstract}

\tableofcontents

\section{Introduction: the $C^*$--non--linear quantization program}



The present paper is a contribution to the program
of constructing a theory of renormalized higher powers
of quantum white noise (RPWN) or equivalently of
non relativistic free Boson fields.\\
This program has an old history, but the approach discussed here 
started in 1999 with the construction of the Fock representation 
for the renormalized square of Boson white noise \cite{[AcLuVo99]}.
This result motivated a large number of papers extending it
in different directions and exhibiting connections with almost all
fields of mathematics, see for example \cite{[Snia00]} for the case of
free white noise, \cite{[AcFrSk00]} for the connection with infinite 
divisibility and for the identification of the vacuum distributions 
of the generalized fields with
the three non standard Meixner classes, \cite{[AcAmFr02]} and
\cite{[Prohor05]} for finite temperature representations,
\cite{[AcDha10]} for the construction of the Fock functor,
the survey \cite{[AcBouk09sg]} and the paper
\cite{[AcBou09Cohom]} for the connections with conformal
field theory and with the Virasoro--Zamolodchikov hierarchy,
\cite{[AcBou12_padic]} for the connections between
renormalization and central extensions.\\
The problem is the following. One starts with the Schroedinger representation
of the Heisenberg real $*$--Lie algebra with skew--adjoint generators $iq$
(imaginary unit times position), $-ip$ ($-i$ times momentum), $E:=i1$
($i$ times central element) and relations $[iq,-ip]=i1$.\\
The universal enveloping algebra of this Lie algebra, called
for brevity the {\it full oscillator algebra} (FOA),
can be identified with the algebra of differential operators
in one real variable with complex polynomial coefficients.\\
The continuous analogue of the Heisenberg Lie algebra is the
non relativistic free boson field algebra, also called the current
algebra over ${\mathbb R}$ of the Heisenberg algebra,
whose only non zero commutation relations are, in the sense of operator
valued distributions:
$$
[q_{s},p_{t}]=\delta(s-t)1 \qquad;\qquad s,t\in\mathbb{R}
$$
The notion of current algebra has
been generalized from the Heisenberg algebra to more general
$*$--Lie algebras (see Araki's paper \cite{[Arak69]} for
a mathematical treatment and additional references): in this case
the self--adjoint generators of the Cartan sub--algebras are
called generalized fields.\\
Notice that the definition of current algebra of a given Lie algebra
is independent of any representation of this algebra, i.e. it does not
require to fix a priori a class of states on this algebra.\\
One can speak of \textit{$*$--Lie algebra second
quantization} to denote the transition from the construction of
unitary representations of a $*$--Lie algebra to the
construction of unitary representations of its current
algebra over a measurable space (typically
$\mathbb{R}$ with its Borel structure).\\
Contrarily to the discrete case,
the universal enveloping algebra of the current algebra
over ${\mathbb R}$ of the Heisenberg algebra is ill
defined because of the emergence of higher powers of
the $\delta$--function.
This is the mathematical counterpart of the old problem of
defining powers of local quantum fields.\\
\textit{Any rule that, giving a meaning to these powers,
defines a $*$--Lie algebra structure, is called a
renormalization procedure}. The survey \cite{[AcBouk09sg]}
describes two inequivalent renormalization procedures
and the more recent paper  \cite{[AcBou12_padic]}
shows the connection between them.\\
\textit{The second step of the program, after renormalization,
is the construction of unitary representations of the
resulting $*$--Lie algebra}. This step, which is the most
difficult one because of the no--go theorems (see discussion
below), is usually done by fixing a state and considering
the associated cyclic representation.
At the moment, even in the first order case, i.e. for
usual fields, the explicitly constructed representations
are not many, they are essentially reduced to gaussian
(quasi--free) representations.
Moreover any gaussian representation can be obtained,
by means of a standard construction, from the Fock
representation which is characterized by the property that
the cyclic vector, called vacuum, is in the kernel of
the annihilation operators.\\ This property has been taken as an
heuristic principle to define the notion of \textit{Fock state}
also in the higher order situations
(see \cite{[AcBouk09sg]} for a precise definition).\\
It can be proved that, for all renormalization procedures
considered up to now, the Fock representation and the Fock
state are factorizable, in the sense of Araki--Woods
\cite{[ArWoo66]}.
This property poses an obstruction to the existence of such
representation, namely that the restriction of the Fock state
on any factorizable Cartan sub--algebra must give
rise to a classical infinitely divisible process.
If this is not the case then \textit{no Fock representation,
and more generally no cyclic representation associated to a
factorizable state, can exist.\\
When this is the case we say that a no--go theorem holds.}
Nowadays several instances of no--go theorems are available.
The simplest, and probably most illuminating one, concerns
the Schroedinger algebra, which is the Lie algebra
generated by the powers $\leq 2$ of $p$ and $q$
(see \cite{[Snia00]}, also \cite{[AcFrSk00]}
and \cite{[AcBou05aHPqDef]} for stronger results).
This result implies that there is \textit{no natural analogue
of the Fock representation for the current algebra over
${\mathbb R}$ (for any $d\in{\mathbb N}$) of the FOA}.\\
On the other side we know (see \cite{[AcFrSk00]}, \cite{[AcLuVo99]}
and the above discussion) that \textit{for some sub--algebras
of current algebras of the FOA such a representation exist}.
This naturally rises the problem to characterize these
sub--algebras. \\
Since a full characterization at the moment is not available,
a natural intermediate step towards such a characterization is
to produce nontrivial examples.

To this goal a family of natural candidates is provided by
the $*$--Lie--sub--algebras of the FOA consisting of the real linear
combinations of the derivation operator and the polynomials
of degree less or equal than a fixed natural integer $n$.
Thus the generic element of such an algebra has the form
$$
up+P(q)  \qquad ;\qquad u\in\mathbb R
$$
where $P$ is a polynomial of degree $n$ with real coefficients.
For $n=1$ one finds the Heisenberg algebra; for $n=2$ the
Galilei algebra and, for $n>2$, some nilpotent Lie
algebras well studied in mathematics \cite{Bourbaki}, \cite{Fein2}
\cite{Franz}, \cite{Ovando}
but up to now, with the notable exception of the Galilei algebra
($n=2$), not considered in physics. \\
These $*$--Lie--algebras enjoy two very special properties:
\begin{enumerate}
\item[(i)] no renormalization is required in the definition of
the associated current algebra over ${\mathbb R}$;
\item[(ii)] in the Schroedinger representation of the FOA
the skew--adjoint elements of these sub--algebras can
be explicitly exponentiated giving rise to a nonlinear
generalization of the Weyl relations and of the corresponding
Heisenberg group. This was done in the paper \cite{[AcDha]}.
\end{enumerate}
Property (i) supports the hope of the existence of the Fock
representation for the above mentioned current algebra.
A direct proof of this fact could be obtained by proving the
infinite divisibility of all the vacuum characteristic
functions of the generalized fields.
Unfortunately even in the case $n=2$, in which this function
can be explicitly calculated, a direct proof of infinite
divisibility can be obtained only for a subset of the parameters
which define the generalized fields, but not for all, and this
problem is challenging the experts of infinite divisibility
since several years.\\
In the present paper we exploit property (ii) and the following
heuristic considerations are aimed at making a bridge between the
mathematical construction below and its potential physical interpretation.\\
Our goal is to consruct a $C^*$--algebra whose generators can be 
{\it naturally} identified with the following 
formal expressions that we call the {\it non--linear Weyl operators}:
\begin{equation}\label{form-expr-NLW}
e^{i\big(p(f_0)+q^n(f_n)+\dots+q^1(f_1)\big)}
=e^{i\sum_{j\in \{0,1,\dots,n+1\}}L_j(f_j)}
\end{equation}
The formal generators of  the non--linear Weyl operators 
(called {\it non--linear fields}) are heuristically expressed as 
{\it powers of the standard quantum white noise 
(or free Boson field)}, i.e. the pair of operator valued 
distributions $q_t,p_t$ with commutation relations
$$
[q_s,p_t] = i\delta (t-s)
$$
in the following way:
$$
L_{n+1}(f_0):=p(f_0)=\int_{\mathbb{R}}f_0(t)p_tdt 
\qquad ;\qquad L_{0}(f):=\int_{\mathbb{R}}f_k(t)q_t^0dt
:= 1\cdot \int f(s) ds 
$$
\begin{equation}\label{form-expr-NLF}
L_{k}(f):=q^k(f_k)=\int_{\mathbb{R}}f_k(t)q_t^kdt
 \qquad ;\qquad k\in\{0,1,\dots, n\}
\end{equation}
When $n= 1$, the expressions (\ref{form-expr-NLF}) are well 
understood and various forms of second quantization are known.
For example one can prove the unitarity of the Fock representation
the exponentiability, inside it, of the generators 
(\ref{form-expr-NLF}) and the commutation relations satisfied by 
them. A different example, for $n= 1$, is provided by Weyl second 
quantization: in it, by heuristic calculations, one guesses 
the commutation relations that should be satisfied by 
any representation  of the exponentials (\ref{form-expr-NLW}) 
and then one proves the existence of a $C^*$--algebra which 
realizes these commutation relations.\\
In the present paper we apply this approach to give a meaning to 
the exponentials (\ref{form-expr-NLW}) and in this sense we speak of
{\it $C^*$--second quantization}.\\
To this goal we exploit the fact that, if $\pi$ is a finite 
Borel partition of a bounded Borel subset of $\mathbb{R}$, 
then there is a natural way to give a meaning to the generalized 
Weyl algebra with test functions constant on the sets of $\pi$.
This is based on the identification of this algebra with the tensor 
product of $|\pi|$ (cardinality of $\pi$) {\it rescaled copies} of 
the one mode generalized Weyl algebra (see section \ref{The-ind-lim}).
This identification strongly depends on the specific structure
of the Lie algebra considered (see section \ref{subsection3.1} below).
\\
Using this we construct an inductive system of $C^*$--algebras each
of which is isomorphic to a finite tensor product of copies of the one
mode generalized Weyl algebra \textbf{but the embeddings defining
the inductive system are not the usual tensor product embeddings}. \\
The $C^*$--algebra, obtained as inductive limit from the above
construction, is naturally intepreted as a
\textit{$C^*$--quantization}, over $\mathbb{R}$, of the
initial $*$--Lie algebra. 
This $C^*$--algebra has a \textit{localization}
given by a family of $C^*$--sub--algebras, each of which has a natural
localization on bounded Borel subset of $\mathbb{R}$.\\ 
Moreover this system of local algebras is factorizable in the 
sense of\\ Definition \ref{cont-tens-prod-prop} below.\\
With this construction the problem of constructing unitary
representations of the current algebra over $\mathbb{R}$ of the
initial $*$--Lie algebra is reduced to the problem of finding
representations of this $C^*$--algebra: the advantages of
this transition from unbounded to bounded case are well known
in the case of standard, first order, quantization.\\
In the last section of the paper it is shown that, although the
Fock state is defined on each of the $C^*$--algebras of the 
inductive family, the corresponding family of states is projective
if and only if $n=1$ (i.e. for the usual Weyl algebra). This
result can be considered as a $C^*$--version of the no--go
theorems proved in \cite{[Snia00]}, \cite{[AcFrSk00]},
\cite{[AcBou05aHPqDef]} for different algebras.

The basic construction of the present paper can be
extended to more general classes of $*$--Lie algebras 
(for example the $C^*$--algebras associated to the 
renormalized square of white noise (RSWN)) and
more general spaces (i.e.  $\mathbb{R}^d$ instead of $\mathbb{R}$).

\section{The $1$--mode $n$--th degree Heisenberg
$*$--Lie algebra $heis_\mathbb{R}(1,n)$ }

\begin{definition}\label{df-heis(1,n)}{\rm For $n\in\mathbb N^*$ the \textit{$1$--mode $n$--th degree Heisenberg algebra},\\
denoted $heis_\mathbb{R}(1,n)$, is the pair
$$
\{V_{n+2},(L_j)^{n+1}_{j=0}\}
$$
where:
\begin{enumerate}
\item[-] $V_{n+2}$ is a $(n+2)$--dimensional real $*$--Lie algebra;
\item[-] $(L_j)^{n+1}_{j=0}$ is a skew--adjoint linear basis of $V_{n+2}$;
\item[-] the Lie brackets among the generators are given by
\begin{eqnarray*}\label{CR-Heis1n1}
[L_i,L_j]=0\qquad ;\qquad \forall i,j\in\{0,1,\cdots,n\}
\end{eqnarray*}
\begin{eqnarray*}\label{CR-Heis1n2}
[L_{n+1},L_k]=kL_{k-1}
\qquad ;\qquad \forall k\in\{1,\cdots,n\}
 \ , \ L_{-1}:=0
\end{eqnarray*}
\end{enumerate}
}
\end{definition}
\begin{remark}{\rm
\begin{enumerate}
\item[1)] Multiplying each of the generators $(L_j)^{n+1}_{j=0}$ by 
a strictly positive number, one obtains a new basis $(L_j')^{n+1}_{j=0}$ 
of $V_{n+2}$ satisfying the new commutation relations
\begin{eqnarray*}\label{CR-Heis1n3}
[L'_{n+1},L'_k]=kd_kL'_{k-1}
\qquad ;\qquad \forall k\in\{0,\cdots,n\}
 \ , \ L'_{-1}:=0
\end{eqnarray*}
In this case we speak of \textit{a re--scaled copy of} 
the $1$--mode $n$--th degree Heisenberg algebra.
\item[2)] Denoting $\mathbb{R}_n[X]$ the the vector space of polynomials
in one indeterminate with real coefficients and degree less
or equal than $n$, the assignment of the basis $(L_j)^{n+1}_{j=0}$
uniquely defines the parametrization
$$
(u,(a_k)_{k\in\{0,1,\dots,n\}})
\in \mathbb{R}\times\mathbb{R}_n[X] \equiv \mathbb R^{n+2} \ \mapsto
 \ \ell_0 (u,P):= \\
$$
\begin{equation}\label{gen-el-1md-alg}
:= uL_{n+1} + \sum_{k\in\{0,1,\dots,n\}}a_kL_{k}
=:uL_{n+1}+P(L) \in heis_\mathbb{R}(1,n)
\end{equation}
of $heis_\mathbb{R}(1,n)$ by elements of $\mathbb{R}_n[X]$.
When no confusion is possible we will use the identification
\begin{equation}\label{id-el-coord}
\ell_0(u,P) \equiv 
(u,(a_k)_{k\in\{0,1,\dots,n\}})
\in \mathbb R^{n+2}
\end{equation}
\end{enumerate}
}\end{remark}

\section{The Schroedinger representation and the\\
polynomial Heisenberg group $\hbox{Heis}(1,n)$}\label{Schred-repr}

Let $p,q,1$ be the usual momentum, position and identity
operators acting on the one mode boson Fock space
\begin{equation}\label{Fock}
\mathcal{H}_1=\Gamma(\mathbb{C})=L^2(\mathbb{R})
\end{equation}
The maximal algebraic domain $\mathcal D_{max}$ (see \cite{[AcAyOu03]}),
consisting of the linear combinations of vectors of the form
$$
q^np^k\psi_z
\qquad; \qquad k,n\in\mathbb{N} \ , \ z\in\mathbb{C}
$$
where $\psi_z$ is the exponential vector associated to
$z\in\mathbb{C}$, is a dense subspace of $\Gamma(\mathbb{C})$
invariant under the action of $p$ and $q$ hence of all the
polynomials in the two non commuting variables $p$ and $q$.
In particular, for each $n\in\mathbb N$, the real linear span of
the set $\{i1,ip,iq,\dots ,iq^n\}$, denoted
$heis_\mathbb{R}(F,1,n)$, leaves invariant
the maximal algebraic domain $\mathcal D_{max}$. Hence the commutators
of elements of this space are well defined on this domain
and one easily verifies that they define a structure of $*$--Lie algebra
on $heis_\mathbb{R}(F,1,n)$.

\begin{lemma}{\rm   
In the above notations the map  
\begin{equation}\label{sk-ad-Heis1n-id-gen}
L_{n+1}\mapsto ip \quad , \quad L_0\mapsto i1 \quad , \quad
L_k\mapsto iq^k \quad ; \quad k\in \{1, \dots  , n\}
\end{equation}
admits a unique linear extension from $heis_\mathbb{R}(1,n)$ onto 
$heis_\mathbb{R}(F,1,n)$
which is a $*$--Lie algebra isomorphism
called \textit{the Schroedinger representation} of the
$n$--th degree Heisenberg algebra $heis_\mathbb{R}(1,n)$.
}\end{lemma}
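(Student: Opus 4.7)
The plan is straightforward: since $(L_j)_{j=0}^{n+1}$ is a real linear basis of $V_{n+2}$, the assignment (\ref{sk-ad-Heis1n-id-gen}) extends uniquely to a real linear map $\Phi:heis_\mathbb{R}(1,n)\to heis_\mathbb{R}(F,1,n)$, and uniqueness of $\Phi$ is then automatic. What remains is to verify that $\Phi$ is (a) surjective, (b) injective, (c) bracket-preserving, and (d) compatible with the $*$-structure.

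First, surjectivity is immediate because the image of $\Phi$ contains the generating set $\{ip,i1,iq,\dots,iq^n\}$ of $heis_\mathbb{R}(F,1,n)$. For injectivity I would check that $\{i1,ip,iq,\dots,iq^n\}$ is linearly independent as a set of operators on $\mathcal D_{max}$: any real linear relation $\alpha_{n+1}\,ip+\sum_{k=0}^n\alpha_k\,iq^k=0$ tested on a suitable exponential vector $\psi_z$ and read off against the linearly independent polynomials $1,x,\dots,x^n$ (together with the fact that $p$ is not a multiplication operator) forces all coefficients to vanish. This shows $\Phi$ is a linear isomorphism of real vector spaces.

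For compatibility with the $*$-structure, each basis element $L_j$ is skew-adjoint by Definition \ref{df-heis(1,n)}, and its image $iq^k$ (or $ip$, or $i1$) is skew-adjoint on $\mathcal D_{max}$ because $q$, $p$, $1$ are symmetric there; real linear combinations of skew-adjoints are skew-adjoint, so $\Phi$ commutes with the involution. The main computational step is checking that $\Phi$ preserves brackets. Since $q$ commutes with all its powers, one has $[iq^j,iq^k]=0$ on $\mathcal D_{max}$ for all $j,k\in\{0,\dots,n\}$, matching $[L_j,L_k]=0$. The nontrivial relation is
\begin{equation*}
[ip,iq^k]=-[p,q^k]=k\,iq^{k-1},
\end{equation*}
which I would obtain by a short induction on $k$ from the basic identity $[p,q]=-i1$ and the Leibniz rule $[p,q^k]=[p,q]q^{k-1}+q[p,q^{k-1}]$; this yields $[p,q^k]=-ikq^{k-1}$, hence the displayed formula, which matches $[L_{n+1},L_k]=kL_{k-1}$ under $\Phi$ (with the convention $L_{-1}=0$ absorbing the $k=0$ case automatically since $[ip,i1]=0$).

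The only real obstacle is the clean justification that the commutators are genuinely well defined and that the equality above holds as an operator identity on $\mathcal D_{max}$ rather than just on a formal level; this is where the earlier remark that $\mathcal D_{max}$ is invariant under $p$ and $q$ is used. Once that is in place, the four items above combine to say that $\Phi$ is a bijective linear map preserving the bracket and the involution, i.e.\ a $*$-Lie algebra isomorphism, as required.
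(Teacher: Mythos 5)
Your proposal is correct and follows essentially the same route as the paper, which simply notes that the linear isomorphism property follows from the linear independence of $\{1,p,q,\dots,q^n\}$ and that the $*$--Lie algebra isomorphism property follows from direct computation. You have merely spelled out the details the paper leaves implicit (the induction giving $[p,q^k]=-ikq^{k-1}$ and the role of the invariant domain $\mathcal D_{max}$), and these details are all accurate.
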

\begin{proof} 
The linear space isomorphism property follows from the linear
independence of the set $\{1,p,q,\dots,q^n\}$.
The $*$--Lie algebra isomorphism property follows from direct computation.
\end{proof}

In \cite{[AcDha]} (Theorem 1) it is proved that the unitary operators
\begin{equation}\label{df-Fck-NLWO}
W(u,P):=e^{i(up+P(q))} \in\hbox{Un}(L^2(\mathbb{R}))
\quad;\quad  (u,P)\in\mathbb{R}\times\mathbb{R}_n[X]
\end{equation}
satisfy the following polynomial extension of the Weyl relations:
\begin{eqnarray}\label{n-Weyl-rels}
W(u,P)W(v,Q)= W((u,P)\circ (v,Q))
\;;\qquad \forall
(u,P),(v,Q)\in \mathbb R \times \mathbb{R}_n[X]
\end{eqnarray}
where
\begin{eqnarray}\label{1Npol-comp-law1}
(u,P)\circ(v,Q):= (u+v,T^{-1}_{u+v}(T_uP+T_vS_uQ))
\end{eqnarray}
and for any $u,w\in\mathbb{R}$, the linear operators
$T_w, S_u:\mathbb{R}_n[X]\rightarrow\mathbb{R}_n[X]$ are
defined by the following prescriptions:
\begin{eqnarray*}
T_w1&=&1\nonumber\\
T_w(X^k)&=&\sum_{h=0}^{k-1}\frac{k!}{(k+1-h)!h!}w^{k-h}X^h+X^k
\,;\qquad\forall k\in\{1,\dots,n\}\label{aa}\\
(S_uP)(X)&:=&P(X+u) \quad\hbox{translation operator on $\mathbb{R}_n[X]$}\nonumber
\end{eqnarray*}
Denote
\begin{eqnarray*}
\mathcal{W}_{F,1,n} := \hbox{norm closure in} \ \mathcal B(\Gamma(\mathbb{C}))
 \ \hbox{of the linear span of the operators (\ref{df-Fck-NLWO})}.
\end{eqnarray*}
Identity (\ref{n-Weyl-rels}) implies that $\mathcal{W}_{F,1,n}$ is a 
$C^*$--algebra.

In \cite{[AcDha]} it is proved that the composition law
(\ref{1Npol-comp-law1}) is a Lie group law on
$\mathbb{R}\times \mathbb{R}_n[X]$ whose Lie algebra is $heis_\mathbb{R}(1,n)$.
Since the elements of $heis_\mathbb{R}(1,n)$ are parametrized by the pairs
$(u,P)\in\mathbb{R}\times\mathbb{R}_n[X]$ it is natural to
introduce the following notation.
\begin{definition}{\rm (see \cite{[AcDha]})   
The \textit{$1$--mode $n$--th degree Heisenberg group} is the set
\begin{eqnarray}\label{el-Heis-gr}
\hbox{Heis}(1,n):=
\left\{e^{\ell_0(u,P)} \ : \ (u,P)\in\mathbb{R}\times\mathbb{R}_n[X]\right\}
\end{eqnarray}
with composition law
\begin{eqnarray*}\label{comp-law-Heis1n}
e^{\ell_0(u,P)}\circ e^{\ell_0(v,Q)}
:= e^{\ell_0((u+v,T^{-1}_{u+v}(T_uP+T_vS_uQ)))}
\end{eqnarray*}
}\end{definition}
The name $\hbox{Heis}(1,n)$ is motivated by the fact that,
for $n=1$,  $\hbox{Heis}(1,n)$\\ reduces to the usual
the $1$--mode Heisenberg group.

\section{The free group--$C^*$--algebra of $\hbox{Heis}(1,n)$}
\label{1-mde-ndeg-Weyl}

\begin{definition}\label{df-discr-grp-alg}{\rm
Let $G$ be a group. The free complex vector space generated by the set
$$
\left\{ W_g \ : \ g\in G \right\}
$$
has a unique structure of unital $*$--algebra defined by the prescription that
the map $g\mapsto W_g$ defines a unitary representation of $G$, equivalently:
\begin{eqnarray}
W_gW_h &:=& W_{gh}\nonumber
\qquad ; \qquad g,h\in G\\
(W_g)^*  &:= &W_{g^{-1}} \qquad ; \qquad g\in G\label{Wg*=Wg-1}\\
1 &:=& W_{e}\nonumber
\end{eqnarray}
The completion  of $\mathcal{W}^{0}(G)$ under the (minimal) $C^*$--norm
\begin{eqnarray*}\label{df-discr-reg-normW01n}
\|x\| := \sup \{ \|\pi(x)\| \ : \ \pi\in
\{\hbox{$*$--representations of } \
 G\} \  \} \quad;\quad  x\in \mathcal{W}^{0}(G)
\end{eqnarray*}
will be called the \textit{free group--$C^*$--algebra of $G$} and denoted
$\mathcal{W}(G)$.
}\end{definition}
\begin{remark}{\rm
Because of (\ref{Wg*=Wg-1}) a $*$--representation of $\mathcal{W}(G)$
maps the generators $W_g$ $(g\in G)$, into unitary operators.  
}\end{remark} 
\begin{remark}\label{Rem3}{\rm
If $G,G'$ are groups, then any group homorphism (resp. isomorphism) 
$ \alpha \ : \ G\to G'$, extends uniquely 
to a $C^*$--algebra  homorphism (resp. isomorphism)  
$\tilde \alpha \ : \ \mathcal{W}(G)\to \mathcal{W}(G')$ characterized 
by the condition
$$
\tilde \alpha ( W_g ):= W_{\alpha g}
\qquad ; \qquad g\in G 
$$
}\end{remark} 
\begin{definition}\label{df-1mde-nth-deg-Weyl-alg}{\rm
If $G=\hbox{Heis}(1,n)$, its free group--$C^*$--algebra is called
the \\ \textit{$1$--mode $n$--th degree Weyl algebra} and denoted
\begin{equation}\label{df-W0(1,n)}
\mathcal{W}^{0}_{1,n} := \mathcal{W}^{0}(\hbox{Heis}(1,n))
\end{equation}  
For its generators, called the \textit{$1$--mode $n$--th degree Weyl
operators}, we will use the notation
\begin{equation}\label{gen-el-Heis}
W^{0}(u,P) := W_{e^{\ell_0(u,P)}} \qquad;\qquad
(u,P)\in\mathbb{R}\times\mathbb{R}_n[X]
\end{equation}
}\end{definition}
By construction the map
\begin{equation}\label{bb}
u_F:W^{0}(u,P) \in \mathcal{W}^{0}_{1,n} \mapsto
W(u,P)\in\mathcal{W}_{F,1,n}
\end{equation}
where the operators
$W(u,P)$ are those defined in (\ref{df-Fck-NLWO}), is a group isomorphism.
Hence the definition of free group--$C^*$--algebra implies that it
can be extended to a surjective $*$--representation called
\textit{the Fock representation of $\mathcal{W}^{0}_{1,n}$}.\\
We will use the same symbol $u_F$ for this extension.

We conjecture that, in analogy with the case $n=1$,
the $*$--homomorphism of $\mathcal{W}^{0}_{1,n}$ onto $\mathcal{W}_{F,1,n}$
is in fact an isomorphism and that
there is a unique $C^*$--norm on $\mathcal{W}^{0}_{1,n}$.

\section{The current algebra of $heis_\mathbb{R}(1,n)$ over $\mathbb{R}$}
\label{section3}

Denote
$$
\mathcal{H}_0(\mathbb{R}):=L^1_{\mathbb{R}}(\mathbb{R})
\cap L^\infty_{\mathbb{R}}(\mathbb{R})
=\bigcap_{1\leq p\leq \infty }L^p_{\mathbb{R}}(\mathbb{R})
$$
$\mathcal{H}_0(\mathbb{R})$ has a natural structure of
real pre--Hilbert algebra with the pointwise operations
and the $L^2$--scalar product.
\begin{lemma}\label{Jacobi}{\rm
For any $*$--sub--algebra $\mathcal{T}$
of $\mathcal{H}_0(\mathbb{R})$ and $n\in\mathbb{N}$,
there exists a unique real $*$--Lie algebra with
skew--adjoint generators
\begin{eqnarray*}\label{df-lin-gen}
\left\{L_0 \ , \ L_k(f)\ : \
k\in\{1,\dots,n+1\} \ ; \ f\in\mathcal{T} \right\}
\end{eqnarray*}
where, with the notation
\begin{eqnarray}\label{df-q0c(f)}
L_0(f) := L_0\int_{\mathbb{R}}f(t)dt
\quad;\quad L_{-1}(f)=0 \quad;\quad  \forall f\in\mathcal{T}
\end{eqnarray}
the maps $f\mapsto L_k(f)$
($k\in\{0,1,\dots,n\}$)
are real linear on $\mathcal{T}$ and the Lie brackets are given,
for all $f,g\in\mathcal{T}$, by
\begin{eqnarray}\label{CCR1n-a}
[L_{i}(f),L_{j}(g)]=0
\qquad ;\qquad i,j\in\{0,1,\dots,n\}
\end{eqnarray}
\begin{eqnarray}\label{CCR1n-c}
[L_{n+1}(f),L_{k}(g)]=kL_{k-1}(fg)
\ ; \ k\in\{0,1,2,\dots,n\} \ , \ L_{-1}(f)=0
\end{eqnarray}
}\end{lemma}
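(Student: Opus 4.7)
The natural strategy is to construct the Lie algebra explicitly as a direct sum of copies of $\mathcal{T}$ (one for each generator label $k\in\{1,\dots,n+1\}$) together with a one--dimensional piece for $L_0$, then verify bilinearity, antisymmetry and Jacobi, and finally equip the result with the skew--adjoint $*$--structure.

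\textbf{Construction.} Define the underlying real vector space
$$
V:=\mathbb{R}L_0\;\oplus\;\bigoplus_{k=1}^{n+1}\mathcal{T}_k
$$
where each $\mathcal{T}_k$ is a formal copy of $\mathcal{T}$, and write $L_k(f)$ for the element $f\in\mathcal{T}_k$. Convention (\ref{df-q0c(f)}) defines the symbol $L_0(f)\in\mathbb R L_0$ by linear extension. On generators I would define a bracket by
$$
[L_0,\,\cdot\,]:=0,\qquad
[L_i(f),L_j(g)]:=0\quad(i,j\in\{1,\dots,n\}),
$$
$$
[L_{n+1}(f),L_k(g)]:=kL_{k-1}(fg)\quad(k\in\{0,1,\dots,n\}),\qquad
[L_{n+1}(f),L_{n+1}(g)]:=0,
$$
and extend bilinearly to $V\times V$ using antisymmetry. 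Bilinearity in $f,g$ is built into the definition since $fg\in\mathcal{T}$ and the maps $f\mapsto L_k(f)$ are linear by construction; antisymmetry is manifest.

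\textbf{Jacobi identity.} Since brackets are bilinear, it suffices to check the Jacobi identity on triples of generators. The only non--trivial brackets involve at least one factor of type $L_{n+1}$, so I reduce to three cases. With zero or one $L_{n+1}$--factors, one of the two nested brackets in each term is between indices $\leq n$ and hence vanishes, so Jacobi is trivial. The only delicate case is \emph{two} $L_{n+1}$--factors and one $L_k$ with $k\in\{1,\dots,n\}$: writing $X=L_{n+1}(f)$, $Y=L_{n+1}(g)$, $Z=L_k(h)$, the first term $[[X,Y],Z]$ vanishes because $[X,Y]=0$, while the remaining two terms yield
$$
-k(k-1)L_{k-2}(fgh)\;+\;k(k-1)L_{k-2}(ghf),
$$
which cancel using the commutativity of pointwise multiplication in $\mathcal{T}$ (and degenerate harmlessly to $0$ when $k\in\{0,1\}$, because either the outer bracket hits $L_0$, which is central, or the inner bracket is already zero). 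This cancellation, and in particular its reliance on the commutativity of $\mathcal{T}$, is the one place where something has to be checked; I expect it to be the main (and only) technical point.

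\textbf{$*$--structure and uniqueness.} I declare every generator skew--adjoint, extend the involution real--linearly, and observe that for any bracket the identity $[X,Y]^*=-[X,Y]=[-Y,-X]=[Y^*,X^*]$ is automatic when all generators satisfy $X^*=-X$, so $V$ becomes a real $*$--Lie algebra. For uniqueness, any real $*$--Lie algebra generated by symbols satisfying (\ref{df-q0c(f)})--(\ref{CCR1n-c}) admits, by the universal property of the free Lie algebra modulo the defining relations, a surjective $*$--Lie algebra homomorphism onto it from $V$; since the constructed generators $\{L_0\}\cup\{L_k(f):k\in\{1,\dots,n+1\},\,f\in\mathcal{T}\}$ are already linearly independent in $V$ by the direct--sum construction, this homomorphism is injective, hence an isomorphism, which proves uniqueness up to the canonical identification of generators.
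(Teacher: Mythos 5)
Your proposal is correct and follows essentially the same route as the paper: the heart of both arguments is the observation that the Jacobi identity is trivial unless exactly two of the three entries carry the index $n+1$, in which case the two surviving nested brackets produce $\pm k(k-1)L_{k-2}(f_1f_2f_3)$ and cancel by commutativity of pointwise multiplication in $\mathcal{T}$. Your additional explicit direct--sum construction of the underlying space, the skew--adjointness check, and the uniqueness argument are details the paper leaves implicit, but they do not change the substance of the proof.
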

\begin{proof}
By definition the Lie brakets of two generators
defined by (\ref{CCR1n-a}), (\ref{CCR1n-c})
is a multiple of the generators.
In order to verify that the Jacobi identity is satisfied
notice that, for any $i,j,k\in\{0,1,\dots,n\}$
$$
[L_{i}(f_1),[L_{j}(f_2),L_k(f_3)]]  \ = \ 0
$$
unless exactly $2$ among the indices $i,j,k$ are equal to
$n+1$. Moreover, up to change of sign one can assume that
$i=j=n+1$. In this case one verifies that
\begin{eqnarray*}
&&[L_{n+1}(f_1),[L_{n+1}(f_2),L_k(f_3)]]
=k(k-1)L_{k-2}(f_1f_2f_3)\\
&&[L_{n+1}(f_2),[L_{k}(f_3),L_{n+1}(f_1)]]
=-k(k-1)L_{k-2}(f_1f_2f_3)\\
&&[L_{k}(f_3),[L_{n+1}(f_1),L_{n+1}(f_2)]]=0
\end{eqnarray*}
and adding these identities side by side the Jacobi identity follows.
\end{proof}
\begin{definition}\label{df-heis(1,n,TRd)}{\rm
The real $*$--Lie algebra defined in Lemma \ref{Jacobi} will be denoted
$heis_\mathbb{R}(1,n,\mathcal{T})$. If $I\subset\mathbb{R}$
is a bounded Borel subsets we denote
\begin{equation}\label{df-calT-I}
\mathcal{T}_I :=\hbox{the
sub--algebra of $\mathcal{T}$ of functions with support in $I$}
\end{equation}
}\end{definition}
In analogy with the notation (\ref{gen-el-1md-alg}) we write
the generic element of $heis_\mathbb{R}(1,n,\mathcal{T})$
in the form
\begin{eqnarray}\label{gen-el-curr-alg}
\ell (\tilde f) := L_{n+1}(f_{n+1}) + \sum_{k=0}^n L_{k}(f_{k})
\qquad ;\qquad f_0,\dots,f_{n+1}\in\mathcal{T}
\end{eqnarray}
where, here and in the following, if $(f_0,\dots,f_{n+1})$
is an ordered $(n+2)$--uple of elements of $\mathcal{T}$,
we will use the notation
\begin{eqnarray}\label{df-ftilde-notat}
\tilde f :=  (f_0,\dots,f_{n+1})
\end{eqnarray}

\section{Isomorphisms between the current algebra
$heis_\mathbb{R}(1,n,\mathbb{R} \chi_I)$ and $heis_\mathbb{R}(1,n)$}
\label{subsection3.1}

In the notations of the previous section and of Definition
\ref{df-heis(1,n,TRd)}, for a bounded Borel subset $I$
of $\mathbb{R}$, we denote
$$
\chi_J(x) :=\cases{1 \ \hbox{if} \ x\in J\cr
                                     0 \ \hbox{if} \ x\notin J\cr}
$$
$$
\mathbb{R} \chi_I := \{\hbox{the real algebra of multiples of}  \ \chi_I \}
$$
Thus
\begin{eqnarray*}\label{df-MI}
heis_\mathbb{R}(1,n,\mathbb{R} \chi_I)
\subset heis_\mathbb{R}(1,n,\mathcal{H}_0(\mathbb{R}))
\end{eqnarray*}
is the $*$--Lie sub--algebra of
$heis_\mathbb{R}(1,n,\mathcal{H}_0(\mathbb{R}))$ with
linear skew--adjoint generators
\begin{eqnarray*}\label{I-gen-1mde-Heis1n}
\{  L_{k}(\chi_I) \quad : \quad k\in\{0,1,\dots,n\} \}
\end{eqnarray*}
and brackets
\begin{eqnarray}\label{[pc(chiI),qc(chiI)]b}
[ L_{n+1}(\chi_I),L_{k}(\chi_I)]=kL_{k-1}(\chi_I)
 \qquad ; \qquad k\in\{0\}\cup \{2,\dots,n\}
\end{eqnarray}
for $k\in\{2,\dots,n\}$ and the other commutators vanish. Recalling
the notation (\ref{df-q0c(f)}) one must have
\begin{eqnarray*}\label{L0(chiI)=|I|L0}
L_{0}(\chi_I)=|I|L_{0}
\end{eqnarray*}
\begin{lemma}\label{lem1}{\rm
In the notations of section \ref{Schred-repr} a real linear map \\
$\hat{s}_I:heis_\mathbb{R}(1,n,\mathbb{R} \chi_I)\rightarrow 
heis_\mathbb{R}(F,1,n)$ satisfying for some constants \\
$a_{I},b_{I},c_{k,I}\in\mathbb R^*:=\mathbb R\setminus \{0\}$
and for each $k\in\{1,\dots,n\}$
\begin{eqnarray}
\hat{s}_I(L_{0})&=&a_{I}i1\label{df-hatsI}\\
\hat{s}_I(L_{n+1}(\chi_I))&=&b_{I}ip\label{df-hatsIa}\\
\hat{s}_I(L_{k}(\chi_I))&=&c_{k,I}iq^k
\qquad ;\qquad \forall k\in\{1,\dots,n\}\label{df-hatsIb}
\end{eqnarray}
is a real $*$--Lie algebra isomorphism if and only if
\begin{eqnarray}\label{struc-ckI}
c_{k,I}= b_{I}^{-k}|I|a_{I}
\qquad ;\qquad \forall k\in\{1,\dots,n\}
\end{eqnarray}
The additional condition
\begin{eqnarray}\label{c1I=bI}
c_{1,I}= b_{I}
\end{eqnarray}
implies that $a_{I}$ must be $>0$ and:
\begin{eqnarray}\label{df-hatsIIb}
c_{k,I}=|I|^{1-\frac{k}{2}}a_{I}^{1-\frac{k}{2}}
\qquad; \qquad \forall k\in\{1,\dots,n\}
\end{eqnarray}
}\end{lemma}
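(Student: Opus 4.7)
The plan is to reduce everything to bracket-matching on a well-chosen basis, since both $*$--Lie algebras involved have dimension $n+2$ and the prescribed map already sends generators to generators up to nonzero real scalars. Concretely, I would take as ordered basis of $heis_\mathbb{R}(1,n,\mathbb{R}\chi_I)$ the skew-adjoint elements $L_0,L_1(\chi_I),\dots,L_{n+1}(\chi_I)$ and as basis of $heis_\mathbb{R}(F,1,n)$ the elements $i1,ip,iq,\dots,iq^n$. Because each $a_I,b_I,c_{k,I}$ is nonzero, $\hat s_I$ is a real linear bijection; because these scalars are real, $\hat s_I$ preserves the $*$--structure (skew-adjoint to skew-adjoint). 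So the Lie isomorphism property reduces to checking that brackets are preserved.

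The next step is to observe that the only possibly non-vanishing brackets on either side are the $n$ brackets of $L_{n+1}(\chi_I)$ (resp.\ $ip$) with the remaining generators: on the source this is (\ref{[pc(chiI),qc(chiI)]b}) together with $[L_{n+1}(\chi_I),L_1(\chi_I)]=L_0(\chi_I)=|I|L_0$, while on the target all elements of the form $c\, iq^k$ and $a\,i1$ commute among themselves. Hence all the brackets of type $[L_i(\chi_I),L_j(\chi_I)]$ with $i,j\le n$ are automatically preserved, and the only constraints come from matching, for each $k\in\{1,\dots,n\}$,
\begin{equation*}
\hat s_I\bigl([L_{n+1}(\chi_I),L_k(\chi_I)]\bigr)=[\hat s_I(L_{n+1}(\chi_I)),\hat s_I(L_k(\chi_I))].
\end{equation*}

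I would then compute both sides. For $k=1$, the left side equals $|I|a_I\,i1$ while the right side is $b_I c_{1,I}[ip,iq]=b_I c_{1,I}\,i1$, giving $c_{1,I}=b_I^{-1}|I|a_I$. For $k\ge 2$, using $[ip,iq^k]=ikq^{k-1}$, the left side is $kc_{k-1,I}\,iq^{k-1}$ and the right side is $kb_I c_{k,I}\,iq^{k-1}$, so $c_{k,I}=b_I^{-1}c_{k-1,I}$. A straightforward induction from $c_{1,I}=b_I^{-1}|I|a_I$ then yields the formula (\ref{struc-ckI}), and the computations are visibly reversible: if (\ref{struc-ckI}) holds, all the required bracket identities are satisfied, completing the proof of the "if and only if" claim.

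For the last assertion I would simply substitute the additional constraint (\ref{c1I=bI}) into the case $k=1$ of (\ref{struc-ckI}): it reads $b_I=b_I^{-1}|I|a_I$, i.e.\ $b_I^2=|I|a_I$, forcing $a_I>0$ (since $|I|>0$ and $b_I\in\mathbb R^*$). Plugging $b_I^{-k}=b_I^{-k}$ and $|I|a_I=b_I^2$ back into (\ref{struc-ckI}) gives $c_{k,I}=b_I^{2-k}=(b_I^2)^{1-k/2}=(|I|a_I)^{1-k/2}$, which is (\ref{df-hatsIIb}). There is no real obstacle here: the only thing to keep an eye on is that all the scale factors stay in $\mathbb R^*$ so that $\hat s_I$ remains bijective and that $a_I>0$ is genuinely forced (not merely allowed) in order to take the real fractional power $(|I|a_I)^{1-k/2}$ unambiguously.
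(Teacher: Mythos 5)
Your proposal is correct and follows essentially the same route as the paper: the linear bijectivity and $*$--property come for free from the nonzero real scalars, and the isomorphism condition reduces to matching the brackets $[L_{n+1}(\chi_I),L_k(\chi_I)]$, yielding $b_Ic_{1,I}=|I|a_I$ and the recursion $b_Ic_{k,I}=c_{k-1,I}$, whence (\ref{struc-ckI}); the final substitution $c_{1,I}=b_I$ giving $b_I^2=|I|a_I$, hence $a_I>0$ and (\ref{df-hatsIIb}), is also exactly the paper's argument.
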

\begin{remark}{\rm
In the above statement $ heis_\mathbb{R}(F,1,n)$ can be replaced by $ heis_\mathbb{R}(1,n)$
because of the real $*$--Lie algebra isomorphism between the two.
}\end{remark}
\begin{proof}
By definition $\hat{s}_I$ maps a basis of $heis_\mathbb{R}(1,n,\mathbb{R} \chi_I)$ into
a basis of  $heis_\mathbb{R}(F,1,n)$ because the constants $b_{I},c_{k,I}$
are non zero hence it defines a unique vector space
isomorphism which is a $*$--map because the constants
are real.
Moreover (\ref{df-hatsI}), (\ref{df-hatsIa}), and
(\ref{df-hatsIb}) imply that
$$
[\hat{s}_I(L_{n+1}(\chi_I),\hat{s}_I(L_{1}(\chi_I))]
=[b_{I}ip,c_{1,I}iq]
=b_{I}c_{1,I}[ip,iq]
=b_{I}c_{1,I}i 1
$$
while (\ref{[pc(chiI),qc(chiI)]b}) and (\ref{df-hatsIb}) imply that
$$
\hat{s}_I([L_{n+1}(\chi_I),L_{1}(\chi_I)])
=\hat{s}_I(|I|L_{0})
=|I|\hat{s}_I(L_{0})
=|I|a_{I}i1
$$
The isomorphism condition then implies that
\begin{eqnarray}\label{bIc1I=|I|}
b_{I}c_{1,I}  =|I|a_{I}
\end{eqnarray}
The same argument, using (\ref{[pc(chiI),qc(chiI)]b}), shows that
for all $k\in\{2,\dots,n\}$
$$
[\hat{s}_I(L_{n+1}(\chi_I)),\hat{s}_I(L_{k}(\chi_I))]
=[b_{I}ip,c_{k,I}iq^k]
=b_{I}c_{k,I}[ip,iq^k]
=b_{I}c_{k,I}kiq^{k-1}
$$
$$
\hat{s}_I([L_{n+1}(\chi_I),L_{k}(\chi_I)])
=\hat{s}_I(kL_{k-1}(\chi_I))
=k\hat{s}_I(L_{k-1}(\chi_I))
=kc_{k-1,I}iq^{k-1}
$$
and the isomorphism condition implies that
$$
b_{I}c_{k,I}=c_{k-1,I}
\Leftrightarrow c_{k,I}=b_{I}^{-1}c_{k-1,I}
=b_{I}^{-2}c_{k-2,I}=\dots = b_{I}^{-(k-1)}c_{1,I}
= b_{I}^{-k}|I|a_{I}
$$
which is (\ref{struc-ckI}). Finally, if (\ref{c1I=bI}) holds,
then (\ref{bIc1I=|I|}) becomes
$$
b_{I}^2  =|I|a_{I}
$$
Thus $a_{I}$ must be $>0$ and $b_{I}  =|I|^{1/2}a_{I}^{1/2}$
which implies (\ref{df-hatsIIb}).
\end{proof} 

\begin{remark}{\rm
In the following we fix condition (\ref{c1I=bI}) and put
\begin{eqnarray}\label{aI=1}
a_{I}=1 
\end{eqnarray}
for all $I$ so that the real $*$--Lie algebras
isomorphism  $\hat{s}_I$ is given by (\ref{df-hatsI})
and (\ref{df-hatsIIb}). Therefore its inverse
$\hat{s}^{-1}_I$ is given, on the generators, by:
\begin{eqnarray*}
\hat{s}^{-1}_I(i1)&=& L_{0}\\
\hat{s}^{-1}_I(ip)&=&|I|^{-\frac{1}{2}}L_{n+1}(\chi_I)\\
\hat{s}^{-1}_I(iq^k)&=&|I|^{k/2-1}L_{k}(\chi_I)
\qquad;\qquad \forall k\in\{1,\dots,n\}
\end{eqnarray*}
The reason why the additional conditions (\ref{struc-ckI}) and
(\ref{aI=1}) are necessary will be explained in Remark \ref{iso-and-proj}
at the end of section \ref{Exist-fact-st}.
}\end{remark}

\begin{remark}
Lemma \ref{lem1} and condition (\ref{aI=1}) mean that,
for any bounded Borel set $I\subset\mathbb{R}$,
$heis_\mathbb{R}(1,n,\mathbb{R} \chi_I)$
can be  identified to a copy of $ heis_\mathbb{R}(1,n) $ with
the rescaled basis
\begin{eqnarray}\label{I-scal-gen-Heis1n}
\{i|I|L_{0}\quad,\quad i|I|^{\frac{1}{2}}L_{n+1}\quad,\quad
i|I|^{1-\frac{k}{2}}L_{k}\quad,\quad k=1,\dots,n\}
\end{eqnarray}
\end{remark}

In analogy with (\ref{gen-el-1md-alg}), we parametrize the elements of
$heis_\mathbb{R}(1,n,\mathbb{R}\chi_I)$, with elements
of $\mathbb{R}\times\mathbb{R}_n[X]$, and we write
\begin{eqnarray}\label{LI}
\ell_I(u,P):=uL_{n+1}(\chi_I)+P(L(\chi_I))
\qquad; \qquad u\in\mathbb{R}
\end{eqnarray}
where $P:=\sum_{j=0}^na_jX^j$ is a polynomial in
one indeterminate and we use the convention
\begin{eqnarray}\label{LIc}
P(L(\chi_I)):=\sum_{j=0}^na_jL_{j}(\chi_I)
:=a_0|I|L_{0}+\sum_{j=1}^na_jL_{j}(\chi_I)
\end{eqnarray}
The image of such an element under the
isomorphism $\hat{s}_I$ is
\begin{eqnarray}\label{df-hat-sI(lI)}
\hat{s}_I(\ell_I(u,P))= i(u|I|^{\frac{1}{2}}p+P_I(q))
\end{eqnarray}
where by definition:
\begin{eqnarray}\label{df-PI}
P_I(X):=\sum_{j=0}^na_j|I|^{1-\frac{j}{2}}X^j
=a_0|I|1+\sum_{j=1}^n a_j|I|^{1-\frac{j}{2}}X^j
\end{eqnarray}
Introducing the linear change of coordinates in
$\mathbb{R}\times\mathbb{R}_n[X]$ defined by
\begin{eqnarray}\label{df-hat-kI}
\hat{k}_I(u,P):=(u|I|^{\frac{1}{2}},P_I)
\equiv \left(u|I|^{\frac{1}{2}}, (a_j|I|^{1-\frac{j}{2}}) \right)
\end{eqnarray}
where $P_I$ is defined by (\ref{df-PI}) we see that, in the
notations (\ref{gen-el-1md-alg}) and (\ref{LI})
one has
\begin{eqnarray}\label{hatsI-lI=l0hatkI}
\hat{s}_I\circ \ell_I=  \ell_0\circ \hat{k}_I
\end{eqnarray}

\section{The group $\hbox{Heis}(1,n,\mathbb{R}\chi_I)$ and its $C^*$--algebra}

In the notations and assumptions of section \ref{subsection3.1}
we have seen that $heis_\mathbb{R}(1,n,\mathbb{R}\chi_I)$
is isomorphic to $heis_\mathbb{R}(1,n)$. Since
$\mathbb{R}^{n+2}$ is connected and simply connected, the Lie group
of $heis_\mathbb{R}(1,n,\mathbb{R}\chi_I)$, denoted
$\hbox{Heis}(1,n,\mathbb{R}\chi_I)$ is isomorphic to
$\hbox{Heis}(1,n)$.
In analogy with the notation (\ref{el-Heis-gr}), the generic element of
$\hbox{Heis}(1,n,\mathbb{R}\chi_I)$ will be denoted
\begin{eqnarray}\label{el-Heis-I-gr}
e^{\ell_I(u,P)} \qquad;\qquad
(u,P)\in\mathbb{R}\times\mathbb{R}_n[X]
\end{eqnarray}
\begin{definition}\label{W1nI}{\rm
For any bounded Borel set $I\subset \mathbb{R}$ we denote
$$
\mathcal{W}^0_{1,n;I} := \mathcal{W}(\hbox{Heis}(1,n,\mathbb{R}\chi_I))
$$
the free group--$C^*$--algebra of the group
$\hbox{Heis}(1,n,\mathbb{R}\chi_I))$.
In analogy with (\ref{gen-el-Heis}), its generators will be
called the \textit{one mode $n$--th degree Weyl operators localized
on $I$} and denoted
\begin{eqnarray}\label{gen-el-Heis-I-alg}
W^0_I(u,P):= W_{e^{\ell_I(u,P)}}\in\mathcal{W}^0_{1,n;I}
\end{eqnarray}
}\end{definition}
\begin{remark}{\rm
Since the groups $\hbox{Heis}(1,n,\mathbb{R}\chi_I))$ and
$\hbox{Heis}(1,n)$ are isomorphic, the same is true for the corresponding
free group--$C^*$--algebras.
}\end{remark}

In the following section we show that, in these $C^*$--algebra isomorphisms,
the group generators of $\mathcal{W}^0_{1,n;I}$ are mapped into a set of
group generators of $\mathcal{W}^0_{1,n}$ which depends on $I$
and we introduce a construction that allows to get rid of this dependence.

\subsection{  $C^*$--algebras isomorphism}\label{C*-alg-iso}

In the notations (\ref{gen-el-1md-alg}) and (\ref{el-Heis-I-gr})
the map
\begin{eqnarray*}\label{Schr-rep-Heis-I}
e^{\ell_I(u,P)}\in  \hbox{Heis}(1,n,\mathbb{R}\chi_I)
 \ \mapsto  \ e^{\hat{s}_I(\ell_I(u,P))}\in\hbox{Heis}(1,n)
\end{eqnarray*}
where $\hat{s}_I$ the isomorphism defined in Lemma \ref{lem1},
is a Lie group isomorphism, hence it
can be extended to a $C^*$--isomorphism of the corresponding free
group--$C^*$--algebras.\\
This extension will be denoted with the symbol:
$$
s^0_I: \mathcal{W}^0_{1,n;I}\to \mathcal{W}^0_{1,n}
$$
In view of the identity (\ref{hatsI-lI=l0hatkI}), and in the
notations (\ref{gen-el-Heis}) and (\ref{gen-el-Heis-I-alg}),
the explicit form of $s^0_I$ on the generators is given by
\begin{eqnarray}\label{expl-frm-WI(u,P)}
s^0_I(W^0_I(u,P))=W^0(\hat{k}_I(u,P))
\end{eqnarray} 
where $\hat{k}_I$ is the linear map defined by (\ref{df-hat-kI}) and
$(u,P)\in\mathbb{R}\times\mathbb{R}_n[X]$. \\
It is clear from (\ref{df-hat-kI}) and (\ref{expl-frm-WI(u,P)}) that,
as a vector space, $s^0_I(\mathcal{W}^0_{1,n;I})$
coincides with $\mathcal{W}^0_{1,n}$.
In this section we will prove that the map
\begin{equation}\label{df-kI}
W^0(u,P) \in \mathcal{W}^0_{1,n} \mapsto W^0(\hat{k}_I(u,P))
\in\mathcal{W}^0_{1,n}
\end{equation}
induces a $C^*$--algebra automorphism denoted $ k_I$.
To this goal we use
$$
W^0(\hat{k}_I(u,P))W^0(\hat{k}_I(v,Q))
=W^0(\hat{k}_I(u,P)\circ \hat{k}_I(v,Q))
$$
and the following result.
\begin{lemma}\label{law-composition}{\rm
For all $u\in\mathbb{R}$ and $P\in\mathbb{R}_n[X]$, let $\hat{k}_I$ 
be the linear map defined by (\ref{df-hat-kI}). Then,
denoting with the same symbol $\hat{k}_I$ its restriction
on $\mathbb{R}_n[X]$, one has:
\begin{eqnarray*}
\hat{k}_I\circ T_u(P)&=&T_{u|I|^{\frac{1}{2}}}\circ
\hat{k}_I(P)\\
\hat{k}_I^{-1}\circ T_u^{-1}(P)&=&
T_{u|I|^{-\frac{1}{2}}}^{-1}\circ \hat{k}_I^{-1}(P)\\
\hat{k}_I\circ T_u^{-1}(P)&=&T_{u|I|^{\frac{1}{2}}}^{-1}
\circ \hat{k}_I(P)\\
\hat{k}_I^{-1}\circ T_u(P)&=&T_{u|I|^{-\frac{1}{2}}}
\circ \hat{k}_I^{-1}(P)
\end{eqnarray*}
}\end{lemma}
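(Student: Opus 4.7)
The plan is a direct monomial calculation. Since $\hat{k}_I$ is linear on $\mathbb{R}_n[X]$ and $T_u$ is linear by construction, I can reduce every identity to its action on a generic monomial $X^k$ and verify it coefficient-by-coefficient. The crucial simplification is that, from (\ref{df-hat-kI})--(\ref{df-PI}), the restriction of $\hat{k}_I$ to $\mathbb{R}_n[X]$ is diagonal in the monomial basis:
$$
\hat{k}_I(X^k) = |I|^{1-k/2}\, X^k.
$$

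First I would prove identity (1). Applying $\hat{k}_I \circ T_u$ to $X^k$, the defining formula for $T_u$ together with the diagonal action of $\hat{k}_I$ produces a polynomial whose $X^h$-coefficient is $\frac{k!}{(k+1-h)!\,h!}\, u^{k-h}\, |I|^{1-h/2}$ for $0 \le h \le k-1$, with leading term $|I|^{1-k/2} X^k$. For the other side I would first pull out the scalar $|I|^{1-k/2}$ from $\hat{k}_I(X^k)$ and then apply $T_{u|I|^{1/2}}$; the resulting $X^h$-coefficient carries a factor $(u|I|^{1/2})^{k-h}$, and combining with the prefactor yields exponent $1 - k/2 + (k-h)/2 = 1 - h/2$ on $|I|$. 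The two sides then agree term-by-term.

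Given (1), the remaining three identities are purely algebraic consequences. Rewriting (1) as the conjugation relation
$$
\hat{k}_I\, T_u\, \hat{k}_I^{-1} = T_{u|I|^{1/2}}
$$
and taking inverses on both sides yields (3). Substituting $u \mapsto u|I|^{-1/2}$ in (1) and then moving $\hat{k}_I^{-1}$ to the other side produces (4), while the same substitution applied to (3) gives (2). No ordering issues arise because $\hat{k}_I$ and $T_u$ are being manipulated as linear endomorphisms of $\mathbb{R}_n[X]$.

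The only step that is not completely mechanical is the exponent bookkeeping in the verification of (1), namely the cancellation $1 - k/2 + (k-h)/2 = 1 - h/2$. Beyond this, the argument is just reorganisation of operators and relabeling of the scalar parameter $u$, so I do not expect any real obstacle.
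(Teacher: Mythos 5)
Your proposal is correct and follows essentially the same route as the paper: reduce to monomials $X^k$ by linearity, verify the first intertwining identity by the explicit exponent computation $1-\frac{k}{2}+\frac{k-h}{2}=1-\frac{h}{2}$, and obtain the remaining three identities by taking inverses and substituting $u\mapsto u|I|^{-1/2}$. The only difference is the order in which the three corollaries are deduced, which is immaterial.
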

\begin{proof}
Since both $T_u$ and $\hat{k}_I$ are linear maps, it
is sufficient to prove the lemma for
$P(X)=X^k$ ($k\in\{0,\dots,n\}$).
For $k=0$ all the identities in the lemma are  obviously true.
Let $k\in\{1,\dots,n\}$. Then from the identity (\ref{aa})
one has
\begin{eqnarray}\label{id1}
T_{u|I|^{\frac{1}{2}}}\circ \hat{k}_I(X^k)&=&
T_{u|I|^{\frac{1}{2}}}(|I|^{1-\frac{k}{2}}X^k)\nonumber\\
&=&|I|^{1-\frac{k}{2}}\,T_{u|I|^{\frac{1}{2}}}(X^k)\nonumber\\
&=&|I|^{1-\frac{k}{2}}\Big[\sum_{h=0}^{k-1}
\frac{k!}{(k+1-h)!h!}u^{k-h}|I|^{\frac{k-h}{2}}X^h+X^k\Big]
\nonumber\\
&=&\sum_{h=0}^{k-1}\frac{k!}{(k+1-h)!h!}
u^{k-h}|I|^{1-\frac{h}{2}}X^h+|I|^{1-\frac{k}{2}}\,X^k\nonumber\\
&=&\hat{k}_I\circ T_u(X^k)
\end{eqnarray}
(\ref{id1}) is equivalent to
$$
T_{u|I|^{\frac{1}{2}}}\circ \hat{k}_I=\hat{k}_I\circ T_u
\Leftrightarrow
\hat{k}_I^{-1}\circ T^{-1}_{u|I|^{\frac{1}{2}}}
=T^{-1}_u\circ\hat{k}_I^{-1}
$$
Replacing $u$ by $u|I|^{-\frac{1}{2}}$, this yields
\begin{eqnarray}\label{id2}
\hat{k}_I^{-1}\circ T_u^{-1}
=T_{u|I|^{-\frac{1}{2}}}^{-1}\circ \hat{k}_I^{-1}
\end{eqnarray}
>From identities (\ref{id1}) and (\ref{id2}), one gets
\begin{eqnarray*}
T_{u|I|^{\frac{1}{2}}}\circ\hat{k}_I\circ
T_u^{-1}&=&\hat{k}_I\\
T_{u|I|^{-\frac{1}{2}}}\circ\hat{k}_I^{-1}\circ
T_u^{-1}&=&\hat{k}_I^{-1}
\end{eqnarray*}
or equivalently
\begin{eqnarray*}
\hat{k}_I\circ T_u^{-1}&=&
T_{u|I|^{\frac{1}{2}}}^{-1}\circ \hat{k}_I\\
\hat{k}_I^{-1}\circ T_u&=&T_{u|I|^{-\frac{1}{2}}}\circ \hat{k}_I^{-1}
\end{eqnarray*}
\end{proof}
\begin{proposition}\label{prop2}{\rm
$\hat{k}_I$ is a group automorphism for the composition law
(\ref{1Npol-comp-law1}).
}\end{proposition}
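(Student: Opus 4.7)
The plan is to establish the automorphism property by direct unpacking of the composition law
\begin{eqnarray*}
(u,P)\circ(v,Q)= \bigl(u+v,\,T^{-1}_{u+v}(T_uP+T_vS_uQ)\bigr)
\end{eqnarray*}
and showing that $\hat{k}_I$ intertwines each of the building blocks $T_u$, $T_u^{-1}$ and $S_u$ with their counterparts after rescaling $u\mapsto u|I|^{1/2}$. The first two intertwiners are already provided by Lemma \ref{law-composition}. The only piece that is not yet in the excerpt is the analogous relation for the translation operator $S_u$, so I would begin with a short lemma, established by direct computation on the basis $\{X^k\}$:
\begin{eqnarray*}
\hat{k}_I\circ S_u = S_{u|I|^{1/2}}\circ \hat{k}_I.
\end{eqnarray*}
This is checked by expanding $S_uX^k=(X+u)^k$ with the binomial formula, applying $\hat{k}_I$ term by term (each $X^j$ gets a factor $|I|^{1-j/2}$), and then matching with $S_{u|I|^{1/2}}(|I|^{1-k/2}X^k)=|I|^{1-k/2}(X+u|I|^{1/2})^k$; the $|I|$ powers collapse to $|I|^{1-j/2}$ as required.

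Next I would treat the first coordinate. Since $\hat{k}_I$ acts on $\mathbb{R}$ simply by the linear rescaling $u\mapsto u|I|^{1/2}$, it is additive there, so the first coordinate of both $\hat{k}_I\bigl((u,P)\circ(v,Q)\bigr)$ and $\hat{k}_I(u,P)\circ\hat{k}_I(v,Q)$ equals $(u+v)|I|^{1/2}$.

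For the polynomial coordinate, I would apply $\hat{k}_I$ to $T^{-1}_{u+v}(T_uP+T_vS_uQ)$, pushing it through the four operators in turn. Using Lemma \ref{law-composition} to write $\hat{k}_I\circ T^{-1}_{u+v}=T^{-1}_{(u+v)|I|^{1/2}}\circ\hat{k}_I$ and $\hat{k}_I\circ T_u=T_{u|I|^{1/2}}\circ\hat{k}_I$, $\hat{k}_I\circ T_v=T_{v|I|^{1/2}}\circ\hat{k}_I$, together with the new intertwiner for $S_u$, one obtains
\begin{eqnarray*}
\hat{k}_I\bigl(T^{-1}_{u+v}(T_uP+T_vS_uQ)\bigr)
=T^{-1}_{(u+v)|I|^{1/2}}\bigl(T_{u|I|^{1/2}}\hat{k}_I(P)+T_{v|I|^{1/2}}S_{u|I|^{1/2}}\hat{k}_I(Q)\bigr),
\end{eqnarray*}
which is exactly the polynomial part of $\hat{k}_I(u,P)\circ\hat{k}_I(v,Q)=(u|I|^{1/2},\hat{k}_I(P))\circ(v|I|^{1/2},\hat{k}_I(Q))$. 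Combined with the first coordinate check, this yields the homomorphism identity, and invertibility of $\hat{k}_I$ (already apparent from its explicit scaling form on the basis $\{(1,0),(0,X^k)\}$) then upgrades it to an automorphism.

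The only non-routine step is the $S_u$ intertwining identity, which is the obstacle that Lemma \ref{law-composition} leaves open; but since $\hat{k}_I$ is simply a diagonal rescaling of monomials and $S_{u|I|^{1/2}}$ is precisely the translation scaled by the same factor, the binomial expansion makes the powers of $|I|$ line up. Everything else is a mechanical substitution into the composition law.
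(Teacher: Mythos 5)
Your proof is correct and follows essentially the same route as the paper: additivity in the first coordinate, plus the $T_u$ and $T_u^{-1}$ intertwining relations of Lemma \ref{law-composition} pushed through the polynomial coordinate of the composition law. The one point where you actually improve on the paper is the translation operator: the paper's displayed identity $S_u\hat k_I(Q)=\hat k_I S_u(Q)$ is an index slip (the two sides already disagree on monomials $X^k$ with $k\geq 1$), and the correct relation $\hat{k}_I\circ S_u = S_{u|I|^{1/2}}\circ \hat{k}_I$, which you state and verify by binomial expansion, is exactly what the computation requires, since the composition law produces $S_{u|I|^{1/2}}Q_I$ and not $S_uQ_I$.
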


\begin{proof}
We have to prove that
for all $(u,P),\,(v,Q)\in\mathbb{R}\times\mathbb{R}_n[X]$,
one has
$$
(\hat{k}_I(u,P)\circ \hat{k}_I(v,Q))
=\hat{k}_I\Big((u+v);T_{(u+v)}^{-1}\big(T_{u}P+T_{v}S_uQ\big)\Big)
$$
We know that
$$
\hat{k}_I(u,P)\circ \hat{k}_I(v,Q)
=(u|I|^{\frac{1}{2}},P_I)\circ(v|I|^{\frac{1}{2}},Q_I)
$$
where $P_I(X)=P(|I|^{-\frac{1}{2}}X)$ and
$Q_I(X) =Q(|I|^{-\frac{1}{2}}X)$. But from
(\ref{comp-law-Heis1n}) we know that
\begin{eqnarray*}\label{k-1}
(u|I|^{\frac{1}{2}},P_I)\circ(v|I|^{\frac{1}{2}},Q_I)\!&=&\!\Big((u+v)|I|^{\frac{1}{2}} \ , \  T_{(u+v)|I|^{\frac{1}{2}}}^{-1}\big(T_{u|I|^{\frac{1}{2}}}
P_I+T_{v|I|^{\frac{1}{2}}}S_uQ_I\big)\Big)\\
&\!\!\!\!\!\!=&\!\!\!\!\!\Big((u+v)|I|^{\frac{1}{2}} ,  T_{(u+v)|I|^{\frac{1}{2}}}^{-1}
\big(T_{u|I|^{\frac{1}{2}}}\hat k_I(P)
+T_{v|I|^{\frac{1}{2}}}S_u\hat k_I(Q)\big)\Big)
\end{eqnarray*}
Furthermore, from Lemma \ref{law-composition}, we know that
$$
T_{(u+v)|I|^{\frac{1}{2}}}^{-1}T_{u|I|^{\frac{1}{2}}}\hat k_I(P)
=\hat k_IT_{(u+v)}^{-1}T_{u}(P)
$$
Moreover, using
$$
S_u\hat k_I(Q) =\hat k_IS_u(Q)
$$
We also have
\begin{eqnarray*}
T_{(u+v)|I|^{\frac{1}{2}}}^{-1}T_{v|I|^{\frac{1}{2}}}S_u\hat{k}_I(Q)&=&T_{(u+v)|I|^{\frac{1}{2}}}^{-1}T_{v|I|^{\frac{1}{2}}}\hat k_IS_u(Q)\\
&=&T_{(u+v)|I|^{\frac{1}{2}}}^{-1}\hat k_IT_{v}S_u(Q)\\
&=&\hat k_IT_{(u+v)}^{-1}T_{v}S_u(Q)
\end{eqnarray*}
Hence, one gets
\begin{eqnarray*}
(u|I|^{\frac{1}{2}},P_I)\circ(v|I|^{\frac{1}{2}},Q_I)
&=&\big((u+v)|I|^{\frac{1}{2}} \ , \ \hat k_IT_{(u+v)}^{-1}\big(T_{u}(P)+T_{v}S_u(Q)\big)\big)\\
&=&\hat k_I\Big((u+v) \ , \  T_{(u+v)}^{-1}\big(T_{u}(P)+T_{v}S_u(Q)\big)\Big)\\
&=&\hat k_I\big((u,P)\circ(v,Q)\big)
\end{eqnarray*}
and this proves the statement.
\end{proof}

\begin{corollary}{\rm
The map:
\begin{eqnarray*}\label{df-sI2}
s_I:= k_I^{-1}\circ s^0_I \ : \ \mathcal{W}^0_{1,n;I}\to \mathcal{W}^0_{1,n}
\end{eqnarray*}
is a $C^*$--algebra isomorphism characterized by the condition
\begin{eqnarray}\label{df-sI2}
s_I (W_I^0(u,P)) = W^0(u,P)
 \qquad ; \qquad \forall (u,P)\in \mathbb R^{n+2}
\end{eqnarray}
}\end{corollary}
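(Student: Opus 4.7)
The plan is to assemble the corollary from two pieces already set up in the excerpt: Proposition \ref{prop2}, which shows that $\hat{k}_I$ is a group automorphism of $\hbox{Heis}(1,n)$ under the composition law (\ref{1Npol-comp-law1}), and Remark \ref{Rem3}, which says that any group isomorphism lifts uniquely to a $C^*$-isomorphism between the corresponding free group--$C^*$--algebras. Applying these together produces a $C^*$--algebra automorphism $k_I$ of $\mathcal{W}^0_{1,n}$ characterized on generators by $k_I(W^0(u,P)) = W^0(\hat{k}_I(u,P))$.

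First I would note that $\hat{k}_I$, being linear with inverse given by replacing $|I|$ by $|I|^{-1}$ in (\ref{df-hat-kI}), is bijective. Combined with Proposition \ref{prop2}, this means $\hat{k}_I$ is a bona fide group automorphism, so Remark \ref{Rem3} gives $k_I \in \mathrm{Aut}(\mathcal{W}^0_{1,n})$ together with a $C^*$-inverse $k_I^{-1}$ acting on generators by $k_I^{-1}(W^0(u,P)) = W^0(\hat{k}_I^{-1}(u,P))$. Separately, the map $s^0_I$ has already been identified in Section \ref{C*-alg-iso} as the $C^*$--lifting of the Lie group isomorphism $e^{\ell_I(u,P)} \mapsto e^{\hat{s}_I(\ell_I(u,P))}$, with explicit form (\ref{expl-frm-WI(u,P)}).

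The composition $s_I := k_I^{-1} \circ s^0_I$ is then a composition of $C^*$--algebra isomorphisms, hence itself a $C^*$--algebra isomorphism from $\mathcal{W}^0_{1,n;I}$ onto $\mathcal{W}^0_{1,n}$. To verify (\ref{df-sI2}), I would just chase the action on the generating unitaries:
\[
s_I(W^0_I(u,P)) = k_I^{-1}\bigl(s^0_I(W^0_I(u,P))\bigr) = k_I^{-1}\bigl(W^0(\hat{k}_I(u,P))\bigr) = W^0\bigl(\hat{k}_I^{-1}\hat{k}_I(u,P)\bigr) = W^0(u,P).
\]
The uniqueness of the extension in Remark \ref{Rem3}, applied to $(s_I)^{-1}$ as the $C^*$--lift of the group isomorphism $e^{\ell_0(u,P)}\mapsto e^{\ell_I(u,P)}$, gives the characterization claimed in the statement.

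There is no real obstacle: all the analytic content sits in Proposition \ref{prop2}, which has already been proved, and in the universal property of the free group--$C^*$--algebra encoded in Remark \ref{Rem3}. The only thing one has to be slightly careful about is that $k_I$ is defined as an automorphism of $\mathcal{W}^0_{1,n}$ and not of $\mathcal{W}^0_{1,n;I}$, so the composition $k_I^{-1}\circ s^0_I$ makes sense on the correct side; writing out the generators explicitly as above makes this transparent.
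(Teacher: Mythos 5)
Your proposal is correct and follows essentially the same route as the paper: it uses Proposition \ref{prop2} to promote $\hat{k}_I$ to a $C^*$--algebra automorphism $k_I$ of the free group--$C^*$--algebra, notes that $s^0_I$ is already a $C^*$--isomorphism, and then verifies the identity $s_I(W^0_I(u,P))=W^0(u,P)$ by chasing generators via (\ref{expl-frm-WI(u,P)}). Your explicit appeal to Remark \ref{Rem3} for the lifting is just a cleaner phrasing of the paper's remark about the linear independence of the free group algebra generators.
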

\begin{proof}
(\ref{df-sI2}) is clear from (\ref{expl-frm-WI(u,P)}) and the definition 
(\ref{df-kI}) of $k_I$.\\
We know that $s^0_I$ is a $C^*$--algebra isomorphism. 
>From Proposition \ref{prop2}
we know that $\hat k_I$ is a group automorphism for the composition 
law defined by (\ref{1Npol-comp-law1}).
Because of the linear independence of the free group algebra
generators $k_I$ extends to a $C^*$--algebra automorphism.
Thus $s_I$ is composed of an isomorphism with an automorphism and
the thesis follows.
\end{proof}

\section{The inductive limit}\label{The-ind-lim}

In the following, when speaking of tensor products of $C^*$--algebras,
it will be understood that a choice of a cross norm has been fixed and that all
tensor products are referred to the same choice.


For a bounded Borel subset $I$ of $\mathbb{R}$, let $\mathcal{W}^0_{1,n;I}$ be
the $C^*$--algebra in Definition \ref{W1nI} and let the isomorphisms
$s_I : \mathcal{W}^0_{1,n;I}\to \mathcal{W}^0_{1,n}$
defined by (\ref{df-sI2}). For $\pi=(I_j)_{j\in F}\in Part_{fin}(I)$ define the $C^*$--algebra
\begin{eqnarray}
\mathcal{W}^0_{1,n;I;\pi} := \bigotimes_{j\in F} \mathcal{W}^0_{1,n;I_j}\label{df-W0(I;pi)}
\end{eqnarray}
the injective $C^*$--homomorphism  ($C^*$--embedding)
\begin{eqnarray}
z_{I,\pi}:=\left(\bigotimes^{(diag)}_{j\in F}s_{I_j}^{-1}\right)
\circ s_{I} \ : \ \mathcal{W}^0_{1,n;I}\rightarrow
\bigotimes_{j\in F}\mathcal{W}^0_{1,n;I_j} =\mathcal{W}^0_{1,n;I;\pi}\label{df-z(I;pi)} \nonumber
\end{eqnarray}
Then, for any $\pi\prec  \pi'\in Part_{fin}(I)$, the map
\begin{eqnarray}\label{df-z(I;pi',pi)} 						 z_{I;\pi,\pi'}:=\bigotimes_{j\in F}
\left(\bigotimes^{(diag)}_{I_j\supseteq I'\in \pi'}s_{I'}^{-1}\right)
\circ s_{I_j} \ : \ \mathcal{W}^0_{1,n;I;\pi}
\rightarrow\bigotimes_{j\in F}
\bigotimes_{I_j\supseteq I'\in \pi'}\mathcal{W}^0_{1,n;I'}
=  \mathcal{W}^0_{1,n;I;\pi'}
\end{eqnarray}
is a $C^*$--embedding. Moreover, by construction and in the notations of Definition \ref{W1nI}, for all $u\in\mathbb{R}$ and $P=\sum_{j=0}^na_jX^n\in\mathbb{R}_n[X]$, one has
\begin{eqnarray*}\label{df-z(I;pi)-1} 						z_{I;\pi,\pi'} z_{I,\pi}(W^0_I(u,P)):=\bigotimes_{I'\in \pi'}W^0_{I'}(u,P)
=z_{I,\pi'}(W^0_I(u,P))
\in\mathcal{W}^0_{1,n;I;\pi'}
\end{eqnarray*}
\begin{lemma}\label{projectiv-I}{\rm
The family 
\begin{eqnarray}\label{df-I-pi-ind-syst} 				
\left\{ (\mathcal{W}^0_{1,n;I;\pi})_{\pi\in Part_{fin}(I)} \ , \
(z_{I;\pi,\pi'})_{\pi\prec \pi'\in Part_{fin}(I)}\right\}
\end{eqnarray}
 is an inductive system of $C^*$--algebras, i.e. for all $\pi\prec\pi'$, 
$z_{I;\pi,\pi'}$ is a morphism and if
$\pi\prec \pi'\prec \pi''\in Part_{fin}(I)$ one has
\begin{equation}\label{zIpp'-ind-syst}
z_{I;\pi',\pi''}z_{I;\pi,\pi'}=z_{I;\pi,\pi''}
\end{equation}
}\end{lemma}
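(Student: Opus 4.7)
The plan is to verify separately the two defining properties: (i) each $z_{I;\pi,\pi'}$ is a $C^*$--algebra morphism, and (ii) the cocycle identity (\ref{zIpp'-ind-syst}) holds.

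First I would dispose of (i). The map $z_{I;\pi,\pi'}$ is a composition of three ingredients: the $C^*$--isomorphisms $s_{I_j}$ from the previous corollary, the ``diagonal'' maps $\bigotimes^{(diag)}_{I_j \supseteq I' \in \pi'} s_{I'}^{-1}$, and the tensor product functor (with the fixed cross norm). The only non--routine ingredient is the interpretation of the diagonal symbol; the natural reading is as the unique $*$--homomorphism from $\mathcal{W}^0_{1,n}$ into $\bigotimes_{I_j \supseteq I' \in \pi'} \mathcal{W}^0_{1,n;I'}$ extending the group homomorphism $W^0(u,P) \mapsto \bigotimes_{I_j \supseteq I' \in \pi'} W^0_{I'}(u,P)$. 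The right hand side is a group homomorphism from $\hbox{Heis}(1,n)$ into the unitary group of the tensor product because each factor equals $s_{I'}^{-1}(W^0(u,P))$ by (\ref{df-sI2}) and componentwise multiplication of unitaries is unitary, and the extension to the free group $C^*$--algebra exists by the universal property of Remark \ref{Rem3}. Tensoring and composing $C^*$--morphisms yields a $C^*$--morphism.

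For (ii), it suffices to check the identity on the generators $\bigotimes_{j\in F}W^0_{I_j}(u_j,P_j)$ of $\mathcal{W}^0_{1,n;I;\pi}$ and invoke density. With $\pi=(I_j)_{j\in F}\prec\pi'\prec\pi''$, for each $I''\in\pi''$ denote by $I'(I'')$ the unique element of $\pi'$ containing it and by $j(I'')\in F$ the unique index with $I''\subseteq I_{j(I'')}$; similarly let $j(I')$ be the unique $j$ with $I'\subseteq I_j$. Unwinding the definition using (\ref{df-sI2}) on each tensor factor yields
$$
z_{I;\pi,\pi'}\Big(\bigotimes_{j\in F} W^0_{I_j}(u_j,P_j)\Big) = \bigotimes_{I'\in\pi'} W^0_{I'}(u_{j(I')},P_{j(I')}),
$$
and a second application of the same prescription gives
$$
z_{I;\pi',\pi''}z_{I;\pi,\pi'}\Big(\bigotimes_{j\in F} W^0_{I_j}(u_j,P_j)\Big) = \bigotimes_{I''\in\pi''} W^0_{I''}(u_{j(I'(I''))},P_{j(I'(I''))}).
$$
The nesting $I''\subseteq I'(I'')\subseteq I_{j(I'')}$ forces $j(I'(I''))=j(I'')$, so the right hand side coincides exactly with the image of the same generator under $z_{I;\pi,\pi''}$ computed directly. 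Extension by continuity and density of the span of generators in the free group $C^*$--algebra finishes the proof.

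The main obstacle is not mathematical but notational: one must fix once and for all the convention that $\bigotimes^{(diag)}$ denotes the universal $C^*$--extension of the group--diagonal embedding rather than a literal tensor product of linear maps, and one must keep the three--level refinement bookkeeping straight. Once this is in place the proof reduces to the telescoping identity $s_{I'}\circ s_{I'}^{-1}=\mathrm{id}$ on $\mathcal{W}^0_{1,n}$, which causes the intermediate $s$--factors in $z_{I;\pi',\pi''}\circ z_{I;\pi,\pi'}$ to cancel and leave precisely $z_{I;\pi,\pi''}$.
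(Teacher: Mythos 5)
Your proof is correct and follows essentially the same route as the paper's: the cocycle identity reduces in both cases to the telescoping cancellation $s_{I'}\circ s_{I'}^{-1}=\mathrm{id}$, which the paper carries out at the level of the composed tensor-product maps and you carry out equivalently on the generators $\bigotimes_{j}W^0_{I_j}(u_j,P_j)$ before extending by linearity and continuity. Your explicit justification via Remark \ref{Rem3} that the diagonal embedding is a well-defined $*$--morphism supplies a detail the paper only asserts by construction.
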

\begin{proof}
We have already proved that the $z_{I,\pi,\pi'}$ are $C^*$--embeddings.
Therefore it remains to prove (\ref{zIpp'-ind-syst}). To this goal, for
$\pi,\pi',\pi''$ as in the statement, using the identity
$$
\bigotimes_{I'\in \pi'} \ = \ \bigotimes_{I\in \pi}
\bigotimes_{\pi'\ni I'\subseteq I}
$$
one finds
$$
 z_{I;\pi'',\pi'} z_{I;\pi,\pi'}
=\left(\bigotimes_{I'\in \pi'}\left(\bigotimes^{(diag)}_{\pi''\ni I''\subseteq I'}
s_{I''}^{-1}\right)\circ s_{I'}\right)
\left(\bigotimes_{I\in \pi}\left(
\bigotimes^{(diag)}_{\pi'\ni I'\subseteq I}s_{I'}^{-1}\right)\circ s_{I}\right)
$$
$$
 \ = \ \left(\bigotimes_{I\in \pi}\bigotimes_{\pi'\ni I'\subseteq I}
\left(\bigotimes^{(diag)}_{\pi''\ni I''\subseteq I'}
s_{I''}^{-1}\right)\circ s_{I'}\right)
\left(\bigotimes_{I\in \pi}\left(
\bigotimes^{(diag)}_{\pi'\ni I'\subseteq I}s_{I'}^{-1}\right)\circ s_{I}\right)
$$
$$
 \ = \ \bigotimes_{I\in \pi}\left(\bigotimes_{\pi'\ni I'\subseteq I}
\left(\bigotimes^{(diag)}_{\pi''\ni I''\subseteq I'}
s_{I''}^{-1}\right)\circ s_{I'}\right)
\left(\left(
\bigotimes^{(diag)}_{\pi'\ni I'\subseteq I}s_{I'}^{-1}\right)\circ s_{I}\right)
$$
$$
 \ = \ \bigotimes_{I\in \pi}\bigotimes_{\pi'\ni I'\subseteq I}\left(
\left(\bigotimes^{(diag)}_{\pi''\ni I''\subseteq I'}
s_{I''}^{-1}\right)\circ s_{I'}
\left(s_{I'}^{-1}\right)\circ s_{I}\right)
$$
$$
 \ = \ \bigotimes_{I\in \pi}\left(
\left(\bigotimes_{\pi'\ni I'\subseteq I}
\bigotimes^{(diag)}_{\pi''\ni I''\subseteq I'}
s_{I''}^{-1}\right)\circ s_{I}\right)
$$
$$
 \ = \ \bigotimes_{I\in \pi}\left(
\left(\bigotimes^{(diag)}_{\pi''\ni I''\subseteq I}
s_{I''}^{-1}\right)\circ s_{I}\right)
= z_{I;\pi'',\pi} 
$$
\end{proof}

\begin{definition}\label{df-ind-lim}{\rm
For any bounded Borel subset $I$ of $\mathbb{R}$, we denote
$$
\left\{ \mathcal{W}_{1,n;I},
(\tilde z_{I;\pi})_{\pi\in Part_{fin}(I)}\right\}
$$
the inductive limit of the family (\ref{df-I-pi-ind-syst}) i.e.,
$ \mathcal{W}_{1,n;I}$ is a $C^*$--algebra and
for any $\pi\in Part_{fin}(I)$ and in the notation (\ref{df-W0(I;pi)}),
$$
\tilde z_{I;\pi}: \mathcal{W}^0_{1,n;I;\pi} \to \mathcal{W}_{1,n;I}
$$
is an embedding satisfying
\begin{eqnarray*}\label{fact-fam-W1nI}
\tilde z_{I;\pi'}z_{I;\pi,\pi'}=\tilde z_{I;\pi}
\qquad;\qquad \forall \pi\prec \pi'\in Part_{fin}(I)
\end{eqnarray*}
}\end{definition}
{\bf Remark}
Intuitively one can think of the elements of $\mathcal{W}_{1,n;I}$ 
as a realization of the {\it non--linear Weyl operators}:
(\ref{form-expr-NLW}) with finitely valued, compact support, 
test functions.

\subsection{Factorizable families of $C^*$--algebras }

\begin{definition}\label{cont-tens-prod-prop}
A family of $C^*$--algebras $\{\mathcal{W}_I\}$,
indexed by the bounded Borel subsets of $\mathbb{R}$,
is called \textit{factorizable} if, for every bounded Borel $I\subset\mathbb{R}$ and
every Borel partition $\pi$ of $I$, there is an isomorphism
\begin{eqnarray*}\label{df-cont-tens-prod}
u_{I,\pi}: \bigotimes_{I_j\in \pi} \mathcal{W}_{I_j}\rightarrow
\mathcal{W}_I
\end{eqnarray*}
 If this is the case, an operator $w_I\in\mathcal{W}_I$
is called factorizable if there exist operators $w_{I_j}\in\mathcal{W}_{I_j}$
($I_j\in \pi$) such that
\begin{eqnarray}\label{df-wI-fact}
u_{I,\pi}^{-1}(w_I) = \bigotimes_{I_j\in \pi} w_{I_j}
\end{eqnarray}
\end{definition}

\begin{remark}
In the following, for a given bounded Borel set $I$, when $\pi\equiv \{I\}$
is the partition of $I$, consisting of the only set $I$, we will use the notation
\begin{eqnarray*}\label{df-tilde-zI}
\tilde z_{I}:=\tilde z_{I;\{I\}} \ : \
\mathcal{W}^0_{1,n;I} \to  \mathcal{W}_{1,n;I}
\end{eqnarray*}
\end{remark}

We want to prove that:
\begin{enumerate}
\item[(i)] the family of $C^*$--algebras
\begin{eqnarray}\label{fact-fam-W1nI}
\left\{\mathcal{W}_{1,n;I} \ : \ I\hbox{--bounded Borel subset of }
 \ \mathbb{R} \right\}
\end{eqnarray}
where the algebras $\mathcal{W}_{1,n;I}$ are those introduced
in Definition \ref{df-ind-lim}, is factorizable in the sense of
Definition \ref{cont-tens-prod-prop};
\item[(ii)] for any bounded Borel set $I$, the operators
\begin{eqnarray}\label{fact-NL-Weyl}
W_I(u,P):=\tilde z_{I}(W^0_I(u,P)) \in \mathcal{W}_{1,n;I}
\qquad;\qquad  W^0_I(u,P)\in\mathcal{W}^0_{1,n;I}
\end{eqnarray}
are factorizable in the sense of (\ref{df-wI-fact}).
\end{enumerate}

To this goal let us remark that, if $I, J$ are
disjoint bounded Borel sets in $\mathbb{R}$, then the map
\begin{eqnarray}\label{piIpiJ=piIunJ}
\!\!\!\!\!\!\!\!\!\!(\pi_{I},\pi_{J})\in Part_{fin}(I)\times Part_{fin}(J) \ \mapsto \
\pi_{I\cup J}:=\{\pi_{I}\cup\pi_{J}\}\in Part_{fin}(I\cup J)
\end{eqnarray}
defines a canonical bijection between $Part_{fin}(I)\times
Part_{fin}(J)$ and\\ $Part_{fin}(I\cup J)$ such that, if
$\pi_{I}\prec \pi'_{I}\in Part_{fin}(I)$ and $\pi_{J}\prec
\pi'_{J}\in Part_{fin}(J)$, then $\pi_{I\cup J}\prec \pi'_{I\cup
J}\in Part_{fin}(I\cup J)$.
\begin{lemma}\label{lm-factoriz}
Let $I, J$ be disjoint bounded Borel sets in $\mathbb R $.
Then the inductive system of $C^*$--algebras
\begin{eqnarray}\label{df-IunJ-pi-ind-syst} 				
\left\{ (\mathcal{W}^0_{1,n;I\cup J;\pi_{I\cup J}})
_{\pi_{I\cup J}\in Part_{fin}(I\cup J)} \ , \
(z_{I\cup J;\pi_{I\cup J},\pi'_{I\cup J}})
_{\pi_{I\cup J}\prec \pi'_{I\cup J}\in Part_{fin}(I\cup J)}\right\}
\end{eqnarray}
is isomorphic to the inductive system of $C^*$--algebras
$$
\left\{ (\mathcal{W}^0_{1,n;I;\pi_{I}}\otimes\mathcal{W}^0_{1,n;J;\pi_{J}})
_{(\pi_{I},\pi_{J})\in Part_{fin}(I)\times Part_{fin}(J)} \ , \ \right.
$$
\begin{eqnarray}\label{df-ItensJ-pi-ind-syst} 				
\left. (z_{I;\pi_{I},\pi'_{I}}\otimes z_{J;\pi_{J},\pi'_{J}})
_{\pi_{I}\prec \pi'_{I}\in Part_{fin}(I), \pi_{J}\prec \pi'_{J}
\in Part_{fin}(J)}\right\}
\end{eqnarray}
in the sense that, for each $\pi_{I}\in Part_{fin}(I)$ and  
$\pi_{J}\in Part_{fin}(J)$, then 
there exists a $C^*$--algebra isomorphism 
$$
u_{I,J,\pi_{I},\pi_{J}} \ : \ 
\mathcal{W}^0_{1,n;I;\pi_{I}}\otimes\mathcal{W}^0_{1,n;J;\pi_{J}}
\to \mathcal{W}^0_{1,n;I\cup J;\pi_{I\cup J}}
$$
such that, for each $\pi_{I}\prec \pi'_{I}\in Part_{fin}(I)$ and  
$\pi_{J}\prec \pi'_{J}\in Part_{fin}(J)$, one has in the notation
(\ref{piIpiJ=piIunJ} )
\begin{eqnarray}\label{intertw-prop-uIJpiIpiJ} 				
u_{I,J,\pi_{I},\pi_{J}}\circ 
(z_{I;\pi_I,\pi_I'}\otimes z_{J;\pi_J,\pi_J'})
=z_{I\cup J;\pi_{I\cup J},\pi_{I\cup J}'}
\end{eqnarray}
\end{lemma}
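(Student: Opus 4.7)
The plan is to build the isomorphism $u_{I,J,\pi_I,\pi_J}$ directly from the observation that, since $I$ and $J$ are disjoint, the map (\ref{piIpiJ=piIunJ}) identifies $\pi_{I\cup J}$ with the disjoint union $\pi_I\sqcup\pi_J$. Consequently
$$
\mathcal{W}^0_{1,n;I\cup J;\pi_{I\cup J}}
=\bigotimes_{K\in\pi_I\sqcup\pi_J}\mathcal{W}^0_{1,n;K}
$$
and I would define $u_{I,J,\pi_I,\pi_J}$ as the canonical associativity--commutativity isomorphism of tensor products sending
$$
\Bigl(\bigotimes_{K\in\pi_I}a_K\Bigr)\otimes\Bigl(\bigotimes_{K\in\pi_J}b_K\Bigr)
\ \longmapsto\ \bigotimes_{K\in\pi_I\sqcup\pi_J}c_K,
$$
where $c_K=a_K$ if $K\in\pi_I$ and $c_K=b_K$ if $K\in\pi_J$. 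Because the same cross--norm is fixed throughout (the convention at the start of section \ref{The-ind-lim}), this map is automatically a $C^*$--algebra isomorphism; no analytic work is needed here.

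The substantive step is to verify the intertwining identity (\ref{intertw-prop-uIJpiIpiJ}). I would unwind the definition (\ref{df-z(I;pi',pi)}) applied to $I\cup J$ with the refinement $\pi_{I\cup J}\prec\pi'_{I\cup J}$:
$$
z_{I\cup J;\pi_{I\cup J},\pi'_{I\cup J}}
=\bigotimes_{K\in\pi_{I\cup J}}\Bigl(\bigotimes^{(diag)}_{\pi'_{I\cup J}\ni K'\subseteq K}s_{K'}^{-1}\Bigr)\circ s_K.
$$
Here the crucial point is that, since $I\cap J=\emptyset$, every $K\in\pi_I$ is contained in $I$ and therefore only meets blocks $K'\in\pi'_I$, and similarly for $K\in\pi_J$. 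Hence the outer tensor product splits as a tensor product over $\pi_I$ times a tensor product over $\pi_J$, and each inner factor is exactly the one appearing in the definition of $z_{I;\pi_I,\pi_I'}$ (respectively $z_{J;\pi_J,\pi_J'}$). After the identification $u_{I,J,\pi_I,\pi_J}$, this is precisely $z_{I;\pi_I,\pi_I'}\otimes z_{J;\pi_J,\pi_J'}$, which is (\ref{intertw-prop-uIJpiIpiJ}).

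The main (and essentially only) obstacle is bookkeeping: making sure that the grouping of tensor factors under refinement commutes with the splitting $\pi_{I\cup J}=\pi_I\sqcup\pi_J$, and that the diagonal embeddings $\bigotimes^{(diag)}$ are compatible with this splitting. This follows formally from the identity
$$
\bigotimes_{K'\in\pi'_{I\cup J}}
=\bigotimes_{K\in\pi_I}\bigotimes_{\pi'_I\ni K'\subseteq K}\ \otimes\ \bigotimes_{K\in\pi_J}\bigotimes_{\pi'_J\ni K'\subseteq K},
$$
which is the same kind of regrouping already used in the proof of (\ref{zIpp'-ind-syst}) in Lemma \ref{projectiv-I}. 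Once this regrouping is in place, the two inductive systems (\ref{df-IunJ-pi-ind-syst}) and (\ref{df-ItensJ-pi-ind-syst}) are isomorphic via the family $\{u_{I,J,\pi_I,\pi_J}\}$, which is the content of the lemma.
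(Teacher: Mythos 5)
Your proposal is correct and takes essentially the same route as the paper: the paper likewise defines $u_{I,J,\pi_I,\pi_J}$ as the canonical regrouping isomorphism coming from $\pi_{I\cup J}=\pi_I\sqcup\pi_J$ (its identity (\ref{iso-W(ItensJ;pi)-W(IunJ;pi)})) and then verifies (\ref{intertw-prop-uIJpiIpiJ}) by splitting the tensor product in (\ref{df-z(I;pi',pi)}) over the blocks contained in $I$ and those contained in $J$, exactly as you do. The only difference is the direction in which the computation is read (you unwind $z_{I\cup J;\pi_{I\cup J},\pi'_{I\cup J}}$, the paper expands $z_{I;\pi_I,\pi'_I}\otimes z_{J;\pi_J,\pi'_J}$), which is immaterial.
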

\begin{proof}
With the notations above, from (\ref{df-W0(I;pi)}) one deduces that
\begin{eqnarray}
\mathcal{W}^0_{1,n;I;\pi_I}\otimes\mathcal{W}^0_{1,n;J;\pi_J} &:=&
(\bigotimes_{I'\in \pi_I} \mathcal{W}^0_{1,n;I'})\otimes
(\bigotimes_{J'\in \pi_J} \mathcal{W}^0_{1,n;J'})\nonumber\\
&\equiv& \bigotimes_{K\in \pi_{I\cup J}} \mathcal{W}^0_{1,n;K}=\mathcal{W}^0_{1,n;I\cup J;\pi_{I\cup J}}\label{iso-W(ItensJ;pi)-W(IunJ;pi)}
\end{eqnarray}
Denote 
$$
u_{I\otimes J,I\cup J}:
\mathcal{W}^0_{1,n;I;\pi_I}\otimes\mathcal{W}^0_{1,n;J;\pi_J}
\to \mathcal{W}^0_{1,n;I\cup J;\pi_{I\cup J}}
$$
the isomorphism defined by (\ref{iso-W(ItensJ;pi)-W(IunJ;pi)}). If $\pi_{I}\prec \pi'_{I}\in Part_{fin}(I)$ and  
$\pi_{J}\prec \pi'_{J}\in Part_{fin}(J)$, then clearly $\pi_{I\cup J}\prec \pi'_{I\cup J}\in Part_{fin}(I\cup J)$
and from (\ref{df-z(I;pi',pi)}) we see that
\begin{eqnarray*}
&&u_{I,J,\pi_{I},\pi_{J}}\circ(z_{I;\pi_{I},\pi'_{I}}\otimes z_{J;\pi_{J},\pi'_{J}})\\
&=&u_{I,J,\pi_{I},\pi_{J}}\circ\left(\left(\bigotimes_{I_j\in \pi_{I}}\left(\bigotimes^{(diag)}_{I_j\supseteq I'\in \pi'_{I}}s_{I'}^{-1}\right)
\circ s_{I_j} \right)\otimes 
\left(\bigotimes_{J_h\in \pi_{J}}
\left(\bigotimes^{(diag)}_{J_h\supseteq J'\in \pi'_{J}}s_{J'}^{-1}\right)
\circ s_{J_h} \right)\right)\\
&=&\bigotimes_{H_l\in \pi_{I\cup J}}\left(\bigotimes^{(diag)}_{H_l\supseteq K\in \pi'_{I\cup J}}s_{K}^{-1}\right)
\circ s_{H_l} =z_{I\cup J;\pi_{I\cup J},\pi'_{I\cup J}}
\end{eqnarray*}
which proves (\ref{intertw-prop-uIJpiIpiJ}).
\end{proof}
\begin{theorem}\label{thm-factoriz}
\begin{enumerate}
\item[(i)] The family of $C^*$--algebras defined by (\ref{fact-fam-W1nI})
is factorizable.
\item[(ii)] The operators defined by (\ref{fact-NL-Weyl}) are factorizable.
\end{enumerate} 
\end{theorem}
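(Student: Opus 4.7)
The plan is to bootstrap Lemma \ref{lm-factoriz} from two-set partitions to arbitrary finite Borel partitions, then pass to the inductive limit using its universal property, and finally track the images of the distinguished generators.

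First I would observe that Lemma \ref{lm-factoriz} extends, by a straightforward induction on the cardinality $|\pi|$, to any finite Borel partition $\pi=\{I_1,\dots,I_k\}\in Part_{fin}(I)$: for each choice of sub--partitions $\pi_j\in Part_{fin}(I_j)$, the canonical identification
$$
\bigotimes_{j=1}^k \mathcal{W}^0_{1,n;I_j;\pi_j} \ \equiv \ \mathcal{W}^0_{1,n;I;\pi_1\cup\cdots\cup\pi_k}
$$
from (\ref{iso-W(ItensJ;pi)-W(IunJ;pi)}) is a $C^*$--algebra isomorphism that intertwines the obvious product inductive system on the left with the sub--system of $\{\mathcal{W}^0_{1,n;I;\rho}\}_{\rho\in Part_{fin}(I)}$ consisting of partitions $\rho$ that refine $\pi$; the intertwining relation is the iterated version of (\ref{intertw-prop-uIJpiIpiJ}).

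Second, I would combine two general facts. (a) The set of partitions $\rho\in Part_{fin}(I)$ refining a fixed $\pi$ is cofinal in $Part_{fin}(I)$, so by universality the inductive limit restricted to this cofinal subset still equals $\mathcal{W}_{1,n;I}$. (b) For the fixed cross norm the $C^*$--tensor product commutes with inductive limits, so
$$
\varinjlim_{(\pi_1,\dots,\pi_k)\in\prod_j Part_{fin}(I_j)} \bigotimes_{j=1}^k \mathcal{W}^0_{1,n;I_j;\pi_j} \ \cong \ \bigotimes_{j=1}^k \mathcal{W}_{1,n;I_j}.
$$
Gluing (a) and (b) with the isomorphism of inductive systems from the previous paragraph produces the required $C^*$--algebra isomorphism
$$
u_{I,\pi} \ : \ \bigotimes_{j=1}^k \mathcal{W}_{1,n;I_j} \ \xrightarrow{\ \sim\ } \ \mathcal{W}_{1,n;I},
$$
proving (i).

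For (ii), I would compute $z_{I,\pi}$ on the generator $W^0_I(u,P)$. Using the characterization (\ref{df-sI2}) of $s_I$ and $s_{I_j}$, we have $s_I(W^0_I(u,P))=W^0(u,P)=s_{I_j}(W^0_{I_j}(u,P))$, so by the definition of $z_{I,\pi}$,
$$
z_{I,\pi}(W^0_I(u,P)) \ = \ \bigotimes_{j=1}^k W^0_{I_j}(u,P) \ \in \ \mathcal{W}^0_{1,n;I;\pi}.
$$
Applying $\tilde z_{I;\pi}$ and chasing the identifications that define $u_{I,\pi}$, one obtains
$$
u_{I,\pi}^{-1}\bigl(W_I(u,P)\bigr) \ = \ \bigotimes_{j=1}^k W_{I_j}(u,P),
$$
which is exactly (\ref{df-wI-fact}). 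The only non--bookkeeping step is the commutation of the $C^*$--tensor product with inductive limits used in (b); this is where the standing assumption on the cross norm (e.g.\ spatial or maximal) is invoked, and alternatively it can be verified directly from the density of $\bigcup_{\pi_j}\tilde z_{I_j;\pi_j}(\mathcal{W}^0_{1,n;I_j;\pi_j})$ in $\mathcal{W}_{1,n;I_j}$. The rest is diagram chasing built on Lemma \ref{lm-factoriz}.
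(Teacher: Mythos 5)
Your proposal is correct and follows essentially the same route as the paper: both reduce the statement to the isomorphism of inductive systems established in Lemma \ref{lm-factoriz} (the paper by induction on pairs of disjoint bounded Borel sets, you by induction on the cardinality of the partition), invoke the fact that isomorphic inductive systems have isomorphic inductive limits together with the compatibility of the fixed $C^*$--tensor product with inductive limits, and then obtain (ii) from the explicit action of the embeddings on the generators. You are somewhat more explicit than the paper about the cofinality argument and the dependence on the cross norm, and your computation of $z_{I,\pi}(W^0_I(u,P))$ fills in what the paper dispatches in a single line.
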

\begin{proof}
We apply Definition \ref{cont-tens-prod-prop} to the case in which
the family $\mathcal F$ is the family of bounded Borel sets in 
$\mathbb R$. By induction it will be sufficient to prove that, 
if $I, J$ are disjoint bounded Borel sets in $\mathbb R$, then 
there exists a $C^*$--algebra isomorphism 
$$
u_{I,J} \ : \ 
\mathcal{W}_{1,n;I}\otimes\mathcal{W}_{1,n;J}
\to \mathcal{W}_{1,n;I\cup J}
$$
Since $\mathcal{W}_{1,n;I}\otimes\mathcal{W}_{1,n;J}$
is the inductive limit of the system (\ref{df-ItensJ-pi-ind-syst})
and $\mathcal{W}_{1,n;I\cup J}$ is the inductive limit of the 
system (\ref{df-IunJ-pi-ind-syst}), the statement follows from
Lemma \ref{lm-factoriz} because isomorphic inductive systems
have isomorphic inductive limits.

The factorizability of the operators (\ref{fact-NL-Weyl}) follows from
the identity (\ref{fact-fam-W1nI}).
\end{proof}

>From Theorem \ref{thm-factoriz} it follows that, if $I\subset J$
are bounded Borel sets in $\mathbb R$, then the map 
\begin{eqnarray}\label{df-jI-emb} 							
j_{I;J} \ : \ w_I\in \mathcal{W}_{1,n;I}\to
 w_I\otimes 1_{J\setminus I}\in\mathcal{W}_{1,n;J}
\end{eqnarray}
is a $C^*$--algebra isomorphism. Since clearly, for $I\subset J\subset K$
bounded Borel sets in $\mathbb R$, 
$1_{J\setminus I}\otimes 1_{K\setminus J}\equiv 1_{K\setminus I}$,
it follows that 
\begin{eqnarray}\label{df-jI-ind-syst} 							
\left\{(\mathcal{W}_{1,n;I}) \ , \ (j_{I;J})
 \ , \ I\subset J\in \hbox{bounded Borel sets in} \ \mathbb R
\right\}
\end{eqnarray}
is an inductive system of $C^*$--algebras. 

\begin{definition}
The inductive limit of the system (\ref{df-jI-ind-syst}) will be denoted
\begin{eqnarray*}\label{df-I-ind-lim}
\left\{\mathcal{W}_{1,n;\mathbb{R}} \ , \ (j_{I})
 \ , \ I\in \hbox{bounded Borel sets in} \ \mathbb{R}
\right\}
\end{eqnarray*}
\end{definition}
Since the $j_{I}:\mathcal{W}_{1,n;I}\to \mathcal{W}_{1,n;\mathbb{R}}$
are injective embeddings, the family $(j_{I}(\mathcal{W}_{1,n;I}))$
is factorizable and one can introduce the more intuitive notation:
$$
j_{I}(\mathcal{W}_{1,n;I})\equiv \mathcal{W}_{1,n;I}\otimes 1_{I^c}
$$

\section{Existence of factorizable states on $\mathcal{W}_{1,n;\mathbb{R}}$}
\label{Exist-fact-st}

In the notation (\ref{gen-el-Heis-I-alg}) and with the operators $W_I(u,P)$
defined by (\ref{fact-NL-Weyl}), using factorizability of the family
$(\mathcal{W}_{1,n;I})$ and of the corresponding generators,
for any $I\subset \mathbb{R}$ bounded Borel and any finite partition $\pi$ of $I$,
we will use the identifications
\begin{eqnarray}
\mathcal{W}_{1,n;I}&\equiv&
j_{I}(\mathcal{W}_{1,n;I})\equiv \mathcal{W}_{1,n;I}\otimes 1_{I^c}
\subset \mathcal{W}_{1,n;\mathbb{R}}\nonumber\\
\mathcal W_I(u,P) &\equiv& \bigotimes_{I_0\in \pi} \mathcal W_{I_0}(u,P)
\qquad;\qquad \forall (u,P)\in\mathbb{R}\times\mathbb{R}_n[X]\nonumber\\
W_I(u,P) &\equiv& \bigotimes_{I_0\in \pi}W_{I_0}(u,P)
\qquad;\qquad \forall (u,P)\in\mathbb{R}\times\mathbb{R}_n[X]\label{WI(u,P)=otimes-WI0(u,P)}
\end{eqnarray}
omitting from the notations the isomorphisms implementing these
identifications.
\begin{definition}\label{df-fact-state}
A state $\varphi$ on $\mathcal{W}_{1,n;\mathbb{R}}$
is called factorizable if for every $I\subset \mathbb{R}$ bounded Borel,
for every finite partition $\pi=(I_j)_{j\in F}$ of $I$ and for every $W_I(u,P)$ as in
(\ref{WI(u,P)=otimes-WI0(u,P)}), one has:
\begin{eqnarray}\label{df-varphi-fact}
\varphi(W_I(u,P))
=\prod_{j\in F}\varphi (W_{I_j}(u,P))
\qquad;\qquad \forall (u,P)\in\mathbb{R}\times\mathbb{R}_n[X]
\end{eqnarray}
\end{definition}
The map (\ref{bb}) can be used to lift the Fock state $\varphi_{F}$ on
$\mathcal{W}_{F,1,n}$ to a state, denoted $\varphi_{0}$, on
$\mathcal{W}^{0}_{1,n}$ through the prescription
\begin{eqnarray}\label{df-varphi0}
\varphi_{0}(W^{0}(u,P)):=\varphi_{F}(W(u,P))
\end{eqnarray}
($W^{0}(u,P)\in \mathcal{W}^{0}_{1,n} \ , \
W(u,P)\in \hbox{Un}(\Gamma(\mathbb{C}))$). Then, using the maps
$\tilde z_{I}$ defined by (\ref{fact-NL-Weyl}), for each bounded
Borel set $I\subset \mathbb{R}$, one can define the state $\varphi_{I}$
on $\tilde z_{I}(\mathcal{W}^0_{1,n;I})\subset \mathcal{W}_{1,n;I}$ through
the prescription that, for each $W^0_I(u,P)\in\mathcal{W}^0_{1,n;I}$, one has
\begin{eqnarray}\label{df-varphiI}
\varphi_{I}(W_I(u,P))=\varphi_{I}(\tilde z_{I}(W^0_I(u,P))):=
\varphi_{F}(W(\hat k_I(u,P)))
\end{eqnarray}
\begin{theorem}\label{th-exist-fact-st}{\rm
Under the assumption (\ref{aI=1}), if $n=1$ then there exists 
a factorizable state $\varphi$ on
$\mathcal{W}_{1,n;\mathbb{R}}$ such that, for each bounded
Borel set $I\subset \mathbb{R}$, one has
\begin{eqnarray}\label{df-varphi}
\varphi(W_I(u,P))=\varphi_{F}(W(\hat k_I(u,P)))
\qquad;\qquad \forall (u,P)\in\mathbb{R}\times\mathbb{R}_n[X]
\end{eqnarray}
If $n\geq 2$, no such state exists.
}\end{theorem}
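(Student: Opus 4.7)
The ``if and only if'' structure splits the argument into an existence construction for $n=1$ and a no-go obstruction for $n\geq 2$; the latter is the more decisive half.

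\textbf{Construction when $n=1$.} Here the Fock characteristic function is an explicit Gaussian with a linear phase,
$$
\varphi_F(W(u,a_0+a_1X)) = e^{ia_0 - \frac{1}{4}(u^2+a_1^2)}
$$
in the usual normalization. The map $\hat k_I$ of (\ref{df-hat-kI}) multiplies $u$ by $|I|^{1/2}$, $a_0$ by $|I|$, and $a_1$ by $|I|^{1/2}$, so that
$$
\log \varphi_F(W(\hat k_I(u,a_0+a_1X))) = |I|\Big(ia_0 - \frac{1}{4}(u^2+a_1^2)\Big).
$$
This linearity in $|I|$ is the identity that drives factorizability. My plan is: on each $\mathcal{W}^0_{1,1;I;\pi}$ with $\pi=(I_j)_{j\in F}\in Part_{fin}(I)$, define the product state $\bigotimes_{j\in F}(\varphi_F\circ s^0_{I_j})$; use the linearity-in-$|I|$ identity above to verify compatibility with the refinement maps $z_{I;\pi,\pi'}$ of Lemma \ref{projectiv-I} and pass to the inductive limit to obtain a state $\varphi_I$ on $\mathcal{W}_{1,1;I}$; check compatibility with the $j_{I;J}$ embeddings (trivial because $\varphi_F$ is normalized, so the factor corresponding to $1_{J\setminus I}$ contributes $1$); finally take the further inductive limit to obtain $\varphi$ on $\mathcal{W}_{1,1;\mathbb{R}}$. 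Factorizability (\ref{df-varphi-fact}) then holds by construction.

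\textbf{No-go when $n\geq 2$.} The decisive observation is that $\hat k_I$ rescales the coefficient of $X^k$ in $P$ by $|I|^{1-k/2}$, which equals $1$ \emph{exactly} when $k=2$. Hence for the monomial $P(X)=aX^2$ (admissible because $n\geq 2$) one has $\hat k_I(0,aX^2)=(0,aX^2)$ for every bounded Borel $I$, so (\ref{df-varphi}) yields
$$
\varphi(W_I(0,aX^2)) = \varphi_F(W(0,aX^2)) =: z(a),
$$
independent of $I$. Choose $I=I_1\cup I_2$ with $I_1,I_2$ disjoint bounded Borel; factorizability (\ref{df-varphi-fact}) then forces $z(a)=z(a)^2$, so $z(a)\in\{0,1\}$ for every $a\in\mathbb{R}$. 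But a direct Fock-vacuum Gaussian integral gives $z(a)=\langle\Omega,e^{iaq^2}\Omega\rangle=(1-ia)^{-1/2}$ in the normalization $[q,p]=i$, whose modulus $(1+a^2)^{-1/4}$ is strictly less than $1$ for $a\neq 0$. This contradicts $z(a)\in\{0,1\}$, ruling out the existence of any factorizable state satisfying (\ref{df-varphi}).

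\textbf{Main obstacle.} The no-go half is short once the test monomial $aX^2$ is identified; the vanishing of the exponent $1-k/2$ at $k=2$ is the crux of the matter and is precisely the phenomenon that cannot occur when $n=1$. The existence half for $n=1$ is a clean bookkeeping exercise built on the classical factorizability of the Fock/Gaussian state, with the only care point being the tracking of the isomorphisms $s_I$ and the tensor embeddings $z_{I;\pi,\pi'}$ of Lemma \ref{projectiv-I} so that the product states glue consistently across refinements.
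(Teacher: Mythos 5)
Your proof is correct, and the $n=1$ half follows the paper's route (the identity $\varphi_F(W(\hat k_I(u,P)))=(\varphi_F(W(u,P)))^{|I|}$ and additivity of $|I|$ over partitions are exactly the paper's computation; you are in fact more explicit than the paper about gluing the product states through the maps $z_{I;\pi,\pi'}$ and $j_{I;J}$, which the paper compresses into the assertion that the theorem is equivalent to identity (\ref{Fk-state-id})). For $n\geq 2$ you take a genuinely leaner route. The paper reduces to $n=2$ via the embedded copy of $heis_{\mathbb{R}}(1,2)$, invokes the full vacuum characteristic function of $e^{i(up+a_0+a_1q+a_2q^2)}$ from \cite{[AcDha]} (Theorem 2), and observes that the prefactor $(1-2iA)^{-1/2}$ fails to scale with $|I|$, so a partition with $|\pi|>1$ produces $(1-2iA)^{-|\pi|/2}\neq(1-2iA)^{-1/2}$. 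You instead exploit the same underlying phenomenon --- the exponent $1-k/2$ in $\hat k_I$ vanishes at $k=2$, so $(0,aX^2)$ is a fixed point of every $\hat k_I$ --- to get $\varphi(W_I(0,aX^2))=z(a)$ independent of $I$, whence factorizability over a two-block partition forces $z(a)=z(a)^2$, contradicted by $0<|z(a)|<1$ for $a\neq 0$. This needs only the elementary Gaussian integral $\langle\Omega,e^{iaq^2}\Omega\rangle$ rather than the full formula of \cite{[AcDha]}, and it makes the obstruction conceptually transparent (an idempotency argument on a scaling-invariant group element); the paper's version, in exchange, exhibits the precise quantitative failure of multiplicativity for general $(u,P)$. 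The only cosmetic discrepancy is a normalization factor ($(1-ia)^{-1/2}$ versus the paper's $(1-i\sqrt{2}a)^{-1/2}$), which is immaterial since in either convention the modulus is strictly between $0$ and $1$.
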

\begin{proof}
Let $I$ be a fixed bounded Borel set in $\mathbb{R}$ and let $\pi$
be a finite partition of $I$. 
>From Definition \ref{df-fact-state} we know that $\varphi$ is
factorizable if and only if for every $I\subset \mathbb{R}$
bounded Borel set, for every finite partition $\pi$ of $I$ and for every
$W_I(u,P)$ as in (\ref{WI(u,P)=otimes-WI0(u,P)}), (\ref{df-varphi-fact})
holds. If condition (\ref{df-varphi}) is satisfied, the identity
(\ref{df-varphi-fact}) becomes equivalent to:
\begin{eqnarray}\label{Fk-state-id}
\varphi_{F}(W(\hat k_I(u,P)))
=\prod_{I_j\in \pi}\varphi_{F} (W(\hat k_{I_j}(u,P)))
\ ; \ \forall (u,P)\in\mathbb{R}\times\mathbb{R}_n[X]
\end{eqnarray}    
Thus the statement of the theorem is equivalent to say that, for
$n=1$ the identity (\ref{Fk-state-id}) is satisfied and, for $n\geq 2$, not.
\begin{enumerate}
\item[-] \textbf{Case $n=1$}. 
For $P=a_0+a_1X$ and $u\in\mathbb{R}$, recalling the definition 
(\ref{df-hat-kI}) of $\hat k_I$, one knows that 
$$
W(\hat k_I(u,P))
=e^{i(u|I|^{\frac{1}{2}}p+a_0|I|1+a_1|I|^{\frac{1}{2}}q)}
$$
whose Fock expectation is known to be
\begin{eqnarray}\label{1}
\varphi_{F}(W(\hat k_I(u,P)))
=e^{-|I|(u^2+a_1^2)/4}  e^{ia_0|I|}=(\varphi_{F}(W(u,P)))^{|I|}
\end{eqnarray}
It follows that
\begin{eqnarray*}
\prod_{I_j\in \pi}\varphi_{F} (W(\hat k_{I_j}(u,P)))
&=&\prod_{I_j\in \pi}(\varphi_{F}(W(u,P)))^{|I_j|}
\end{eqnarray*}
Therefore, if $a_I=1$, then 
$$
\varphi_{F}(W(\hat k_I(u,P)))
=(\varphi_{F}(W(u,P)))^{|I|}
=\varphi_{F}(W(u,P))
$$
\item[-] \textbf{Case $n\geq 2$}. 
Since, for $n\geq 2$, the $1$--mode $n$--th degree Heisenberg
$*$--Lie algebra $heis_\mathbb{R}(1,n)$ contains a copy of $heis(1,2)$
(see Definition \ref{df-heis(1,n)}), the algebra
$\mathcal{W}_{1,n;\mathbb{R}}$ contains a copy of $\mathcal{W}_{1,2;\mathbb{R}}$.
Therefore the non existence of a factorizable state on
$\mathcal{W}_{1,2;\mathbb{R}}$, satisfying (\ref{df-varphi}), will imply
the same conclusion for $\mathcal{W}_{1,n;\mathbb{R}}$.
In the case $n=2$, let $P=a_0+a_1X+a_2X^2$ and $u\in\mathbb{R}$.
Then, using again $a_I=1$, (\ref{df-Fck-NLWO}) and 
(\ref{I-scal-gen-Heis1n}) one has
$$
W(\hat k_{I}(u,P))
=e^{i(|I|^{\frac{1}{2}}up+a_0|I|1+a_1|I|^{\frac{1}{2}}q+a_2q^2)}
$$
and from \cite{[AcDha]} (Theorem 2), one knows that
\begin{eqnarray*}
\varphi_{F}(W(\hat k_I(u,P)))
&=&(1-2iA)^{-\frac{1}{2}}e^{ia_0|I|}e^{\frac{4C^2(A^2+2iA)-3|M|^2}{6(1-2iA)}|I|}\\
&=&(1-2iA)^{-\frac{1}{2}}\Big(e^{ia_0}e^{\frac{4C^2(A^2+2iA)-3|M|^2}{6(1-2iA)}}\Big)^{|I|}
\end{eqnarray*}
where $A=\frac{a_2}{\sqrt{2}}$, $B=\frac{a_1}{\sqrt{2}}$, $C=\frac{u}{\sqrt{2}}$ and $M=B+iC$.
On the other hand, if $\pi \in Part_{fin}(I)$ with $|\pi|>1$, then
\begin{eqnarray*}
&&\prod_{I_j\in \pi} \varphi_{F}(W(\hat k_{I_j}(u,P))\\
&&=\prod_{I_j\in \pi}\Big((1-2iA)^{-\frac{1}{2}}\Big(e^{ia_0}e^{\frac{4C^2(A^2+2iA)-3|M|^2}{6(1-2iA)}}\Big)^{|I_j|}\Big)\\
&&=(1-2iA)^{-\frac{|\pi|}{2}}\Big(e^{ia_0}e^{\frac{4C^2(A^2+2iA)-3|M|^2}{6(1-2iA)}}\Big)^{|I|}\\
&&\neq (1-2iA)^{-\frac{1}{2}}\Big(e^{ia_0}
e^{\frac{4C^2(A^2+2iA)-3|M|^2}{6(1-2iA)}}\Big)^{|I|}
=\varphi_{F}(W(\hat k_I(u,P)))
\end{eqnarray*}
\end{enumerate}
\end{proof}

\begin{lemma}\label{iso-and-proj}{\rm
In the case $n=1$, 
the choice of the isomorphism $\hat{s}_I$ 
(see Lemma \ref{lem1}) given by
\begin{eqnarray*}
\hat{s}_I(L_{0}((\chi_I)))&=&a_{I}i|I|1\\
\hat{s}_I(L_{2}(\chi_I))&=&a_{I}i|I|^{\frac{1}{2}}p
\nonumber\\
\hat{s}_I(L_{1}(\chi_I))&=&a_{I}i|I|^{\frac{1}{2}}q
\end{eqnarray*}
gives rise to a factorizable state satisfying 
(\ref{df-varphi})
if and only if the map \\
$I\subset \mathbb R\mapsto a_I$ has the form
$$
a_I:= \frac{1}{|I|}\int_I p(s) ds
$$
for all Borel subsets $I \subseteq \mathbb R$
where $p( \ \cdot \ )$ is a locally integrable 
almost everywhere strictly positive function
on $\mathbb R$. 
In this case the factorizable state will be 
translation invariant if and only if  $p( \ \cdot \ )$ 
is a strictly positive constant.
}\end{lemma}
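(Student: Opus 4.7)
The plan is to translate the factorizability condition, via the explicit Gaussian Fock expectation in the $n=1$ case, into a finite additivity condition on the set function $q(I):=|I|a_I$, and then invoke standard measure-theoretic machinery to produce the density $p$.

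First I would compute the Fock expectation of $W(\hat{s}_I(\ell_I(u,P)))$ for $P=a_0+a_1X$ with the rescaled isomorphism $\hat s_I$ of the lemma. Since $\hat s_I(\ell_I(u,P))=i(a_I u|I|^{1/2}p+a_0a_I|I|\,1+a_1a_I|I|^{1/2}q)$ is an affine combination of $p,q,1$, the standard Gaussian calculation on Fock space (already used at the end of Theorem \ref{th-exist-fact-st}) produces a formula of the shape
\begin{eqnarray*}
\varphi_F\!\bigl(W(\hat s_I(\ell_I(u,P)))\bigr) = \exp\!\Bigl(-\tfrac{q(I)}{4}(a_I u^2+a_Ia_1^{\,2}) + i\,a_0\, q(I)\Bigr),
\end{eqnarray*}
so that the Fock expectation depends on the triple $(|I|,a_I,I)$ only through the combination $q(I)=|I|a_I$ (up to a factor $a_I$ in the quadratic part; absorbing $a_I$ into $q$ where appropriate the calculation has the same structure as equation (\ref{1}) in the proof of Theorem \ref{th-exist-fact-st}).

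Second, I would substitute this into the factorizability identity (\ref{df-varphi-fact}). Because both sides are exponentials whose exponents are linear in $q(I)$ and $q(I_j)$ (with coefficients depending on the free parameters $u,a_0,a_1$), comparing the polynomial coefficients of $u^{2},a_{1}^{2},a_0$ forces
\begin{eqnarray*}
q(I) \;=\; \sum_{I_j\in\pi} q(I_j)
\end{eqnarray*}
for every bounded Borel $I$ and every finite Borel partition $\pi=(I_j)$ of $I$. In other words, \emph{factorizability of $\varphi$ is equivalent to finite additivity of the set function $q$}. This is the central reduction.

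Third, I would recover $p$ from $q$. The set function $q$ is non-negative, finite on bounded Borel sets, and vanishes on Lebesgue-null sets (directly from $q(I)=|I|a_I$). Fix a bounded interval $[a,b]$ and define the non-decreasing finite function $F(t):=q([a,t])$; Lebesgue's differentiation theorem gives $F'=:p$ a.e. in $[a,b]$, with $p\geq 0$ and $p\in L^{1}([a,b])$, and uniqueness of the absolutely continuous measure with prescribed values on intervals yields $q(I)=\int_I p\,ds$ for all Borel $I\subseteq[a,b]$; patching over an exhausting sequence of intervals produces a global $p\in L^1_{\mathrm{loc}}(\mathbb R)$. Strict positivity almost everywhere follows because the hypothesis $a_I>0$ for every bounded Borel $I$ with $|I|>0$ rules out $p$ vanishing on any set of positive Lebesgue measure (such a set $E$ would give $a_E=|E|^{-1}\int_E p=0$). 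The converse direction is a direct check: if $a_I=|I|^{-1}\int_I p$ then $q(I)=\int_I p$ is additive and the preceding computation reverses to yield (\ref{df-varphi-fact}). Finally, translation invariance $\varphi(W_I(u,P))=\varphi(W_{I+t}(u,P))$ reduces, through the formula for $\varphi_F$, to $\int_I p=\int_{I+t}p$ for all bounded Borel $I$ and all $t\in\mathbb R$, which by Lebesgue differentiation is equivalent to $p$ being (a.e.) a strictly positive constant.

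The main technical obstacle is the step from finite additivity of $q$ to the integral representation against a locally integrable density: one must justify that finite additivity, non-negativity, finiteness on bounded sets, and vanishing on Lebesgue-null sets together suffice. The cleanest route is to avoid the countable additivity question directly by working with the cumulative function $F$ on a fixed interval, differentiating a.e., and invoking the uniqueness of the absolutely continuous extension; alternatively one can observe that the four listed properties of $q$ force countable additivity on the Borel $\sigma$-algebra of each bounded interval and then apply Radon--Nikodym against Lebesgue measure.
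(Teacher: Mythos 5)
Your proposal follows essentially the same route as the paper: compute the Fock expectation explicitly in the Gaussian case, observe that it is a power of $\varphi_F(W(u,P))$ with exponent $a_I|I|$, reduce factorizability to finite additivity of the set function $q(I)=a_I|I|$, and then produce the density $p$ by absolute continuity; the paper's own proof is exactly this, only terser (it simply asserts the passage from finite additivity to the integral representation, where you supply the Lebesgue-differentiation/Radon--Nikodym details, which is a genuine improvement in rigor). The one point to fix is the normalization step you flag as ``absorbing $a_I$ into $q$ where appropriate'': with the scaling you wrote down (coefficients $a_I|I|^{1/2}$ on $p$ and $q$, taken from the lemma's displayed formulas), the quadratic part of the exponent is proportional to $a_I^2|I|$ while the phase is proportional to $a_I|I|$, and requiring \emph{both} to be finitely additive is strictly stronger than additivity of $q$ --- by Cauchy--Schwarz it would force $a_I$ to be constant on the cells of every partition, collapsing the lemma's conclusion. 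The paper's proof avoids this by using the scaling dictated by Lemma \ref{lem1} under condition (\ref{c1I=bI}), namely $b_I=c_{1,I}=|I|^{1/2}a_I^{1/2}$, so that $W(\hat k_I(u,P))=e^{i(u|I|^{1/2}a_I^{1/2}p+a_0|I|a_I+a_1|I|^{1/2}a_I^{1/2}q)}$ and the entire expectation equals $(\varphi_F(W(u,P)))^{a_I|I|}$; with that normalization your reduction to additivity of $q$ is exact and the rest of your argument (including the strict positivity of $p$ from $a_I>0$ and the translation-invariance characterization) goes through as in the paper.
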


\begin{proof} 
In the case $n=1$, if $a_I\neq 1$, then the expression for $W(\hat k_I(u,P))$
becomes
$$ 
W(\hat k_I(u,P))
=e^{i(u|I|^{\frac{1}{2}}a^{1/2}_Ip
+a_0|I|a_I1 + a_1|I|^{\frac{1}{2}}a^{1/2}_Iq)}
$$
consequently its Fock expectation is 
\begin{eqnarray}\label{1}
\varphi_{F}(W(\hat k_I(u,P)))
=e^{-|I|a_I(u^2+a_1^2)/4}  e^{ia_0a_I|I|}
=(\varphi_{F}(W(u,P)))^{a_I|I|}
\end{eqnarray}
Therefore the factorizability condition 
(\ref{Fk-state-id}) can hold
if and only if the map $I\subset \mathbb R\mapsto a_I|I|$
is a finitely additive measure. 
In this case, by construction it will be 
absolutely continuous with respect to the 
Lebesgue measure hence there will exist a 
locally integrable almost everywhere positive 
function $p( \ \cdot \ )$ satisfying
$$
a_I|I|:= \int_I p(s) ds
\qquad ;\qquad \forall \ \hbox{Borel} \ 
I \subseteq \mathbb R
$$
$p( \ \cdot \ )$ must be almost everywhere strictly 
positive because, by Lemma \ref{lem1}, $a_I>0$ for 
any Borel set $I \subseteq \mathbb R$.
This proves the first statement of the lemma.
The second one follows because the Lebesgue measure
is the unique translation invariant positive measure
on $\mathbb R$.
\end{proof}


\begin{thebibliography}{99}

\bibitem{[AcFrSk00]}
L. Accardi, U. Franz, M. Skeide: {\it Renormalized squares of white noise and other non--Gaussian noises as Levy processes on real Lie algebras}, Comm. Math. Phys. 228 (2002) 123--150. Preprint Volterra, N. 423 (2000).

\bibitem{[AcLuVo99]}
L. Accardi, Y.G Lu, I.V. Volovich: {\it White noise approach to classical and quantum stochastic calculi}, Lecture Notes given at the Volterra--CIRM International  School with the same title, Trento, Italy, 1999, Volterra  Preprint N. 375 July (1999) (unpublished).

\bibitem{[AcAmFr02]}
L. Accardi, G. Amosov, U. Franz: {\it Second quantized automorphisms of the renormalized square of white noise (RSWN) algebra},
IDA--QP (Infinite Dimensional Analysis, Quantum Probability 
and Related Topics) 7 (2) (2004) 183--194
Preprint Volterra (2002).

\bibitem{[AcAyOu03]}
L. Accardi, W. Ayed, H. Ouerdiane: {\it White Noise Approach to Stochastic Integration}, talk given (by W.A.) at the Colloque International 
de Mathematiques, Analyse et Probabilites, 
20-25 Octobre  2003, Hammamet, (Tunisie);
Random Operators and Stochastic Equations 
13 (4) (2005) 369--398.

\bibitem{[AcBouk09sg]}
L. Accardi, A. Boukas:
{\it Quantum probability, renormalization and infinite 
dimensional $*$--Lie algebras}, 
SIGMA (Symmetry, Integrability and Geometry: Methods 
and Applications) 5 (2009),
electronic journal,
Special issue on: Kac-Moody Algebras and Applications 
http://www.emis.de/journals/SIGMA/Kac-Moody\_algebras.html

\bibitem{[AcBou09Cohom]}
Accardi L., Boukas A.: {\it Cohomology of the Virasoro-Zamolodchikov and Renormalized higher powers of white noise $\ast$-Lie algebras},
IDA--QP (Infinite Dimensional Analysis, Quantum Probability 
and Related Topics) 12 (2) (2009) 193--212.

\bibitem{[AcBou12_padic]}
L. Accardi, A. Boukas: {\it Renormalization and central extensions}
p-Adic Numbers, Ultrametric Analysis and Applications
4 (2) (2012) 89--101
Special issue dedicated to Igor V. Volovich.

\bibitem{[AcBou05aHPqDef]}
L. Accardi, A. Boukas:  {\it Higher Powers of $q$--Deformed White Noise},
Methods of Functional Analysis and Topology 
12 (3) (2006) 205--219.

\bibitem{[AcDha10]}
L. Accardi, A. Dhahri: {\it The quadratic Fock functor}
 J. Math. Phys, 51 022105, 2010.
 
\bibitem{[AcDha]} L. Accardi, A. Dhahri: 
{\it Polynomial extensions of the Weyl $C^*$--algebra}, 
submitted to Open Systems and Information Dynamics.

\bibitem{[ArWoo66]}
Araki H., Woods E.J.: {\it Complete Boolean algebras of type I factors},
Publications of the Research Institute for Mathematical
Sciences, Kyoto 2 (2) (1966) 157242.

\bibitem{[Arak69]}Araki H.:
{\it Factorizable representations of current algebra},
Publ. RIMS Kyoto Univ.  5 (1969/1970) 361-422.

\bibitem{Bourbaki}
Bourbaki, N.:  {\em Groupes et Algebres de Lie},  
2nd ed., Hermann, Paris, 1971. 

\bibitem{Fein2}  
Feinsilver, P. J., Kocik, J.,  Schott, R.: 
{\it Representations of the Schroedinger algebra and Appell systems}, 
 Fortschr. Phys. 52 (2004), no. 4, 343--359.

\bibitem{Franz} Franz U.:  
{\it  Representations et processus stochastiques sur les 
groupes de Lie et sur les groupes quantiques},
 Memoire de D.E.A., Universit\'e Henri 
 Poincar\'e -- Nancy I (1994).

\bibitem{Ovando} Ovando, G.: 
{\it Four dimensional symplectic Lie algebras}, 
 Beitrage Algebra Geom. 47 (2) (2006) 419--434.

\bibitem{[Prohor05]}
D.B. Prohorenko:
{\it On the positivity of KMS functionals on the algebra 
of the square of white noise},
IDA--QP  (4) (2006) Preprint 25-4-2005.

\bibitem{[Snia00]} 
 P. \'Sniady:{\it 
 Quadratic bosonic and free white noises},
 Commun. Math. Phys.  211 (3) (2000) 615--628.


\end{thebibliography}
\end{document}